\documentclass[12pt]{amsart}
\usepackage[bottom=2cm, right=2cm, left=2cm, top=2cm]{geometry}
\usepackage{amssymb,amsmath,amsthm,mathtools,amsfonts,times,hyperref,mathrsfs,multirow}
\usepackage{pgfplots}
\usepackage{longtable}
\usepackage{booktabs}
\usepackage{caption}
\pgfplotsset{compat=1.15}
\usepackage{mathrsfs,mathdots}
\usetikzlibrary{arrows}
\usepackage{enumerate}
\usepackage{graphicx}
\usepackage{array}
\usepackage{ytableau}
\usepackage{pstricks,pst-node,graphicx}
\usepackage{tikz,varwidth}
\usepackage{setspace}
\usepackage{float}
\usetikzlibrary{calc}
\usepackage[super]{nth}
\usepackage{tikz}
\usepackage{extarrows}
\usepackage{pgfplots}
\usepackage{hyperref}
\usepackage{cleveref}
\pgfplotsset{compat=1.15}
\usetikzlibrary{arrows}

\newtheorem{theorem}{Theorem}[section]

\theoremstyle{definition}

\theoremstyle{remark}
\newtheorem{remark}{Remark}
\numberwithin{equation}{section}

\allowdisplaybreaks

\newcommand\numberthis{\addtocounter{equation}{1}\tag{\theequation}}
\newcommand\restr[2]{{
  \left.\kern-\nulldelimiterspace 
  #1 
  \littletaller 
  \right|_{#2} 
  }}
\newcommand{\littletaller}{\mathchoice{\vphantom{\big|}}{}{}{}}  

\begin{document}
\title[Separable integer partition classes and Slater's list - I]{Separable integer partition classes and Slater's list - I}
\author{Aritram Dhar}
\address{Department of Mathematics, University of Florida, Gainesville,
FL 32611, USA}
\email{aritramdhar@ufl.edu}
\author{Ankush Goswami}
\address{Department of Mathematics, University of Texas Rio Grande Valley, Edinburg, TX 78541, USA}
\email{ankush.goswami@utrgv.edu}
\author{Runqiao Li}
\address{Department of Mathematics, University of Texas Rio Grande Valley, Edinburg, TX 78541, USA}
\email{runqiao.li@utrgv.edu}

\date{\today}

\subjclass[2020]{05A17, 05A19, 11P81, 11P82, 11P84.}             

\keywords{partitions, overpartitions, separable integer partition classes, Slater's list, partitions with restricted gaps, partitions with positional gaps.}

\begin{abstract}
Slater’s list of Rogers--Ramanujan type identities consists of 130 series--product identities whose analytic proofs rely primarily on Bailey pair techniques. Although these identities play an important role in the theory of $q$-series and partitions, combinatorial interpretations for many of them remain unknown, largely because the series sides are difficult to interpret naturally in terms of partitions. In this paper we apply Andrews’ theory of separable integer partition (SIP) classes to several identities from Slater’s list. By constructing suitable SIP classes, we obtain natural partition-theoretic interpretations and parameterized generalizations of their series sides. We then apply various $q$-hypergeometric transformations to these generalized series to derive alternative expressions, which in certain cases reduce to infinite products. These results illustrate how the SIP framework provides a systematic approach to understanding Rogers--Ramanujan type identities and offer new combinatorial insights into identities appearing in Slater’s list.

\end{abstract}

\maketitle
\section{Introduction}

A \textit{partition} $\lambda$ is a non-increasing finite sequence $\lambda = (\lambda_1,\lambda_2,\dots,\lambda_k)$ of positive integers. The elements $\lambda_i$ appearing in the sequence $\lambda$ are called the \textit{parts} of $\lambda$. The number of parts or \textit{length} of $\lambda$ is denoted by $\#(\lambda)$ and the largest part of $\lambda$ is denoted by $l(\lambda)$. The sum of all the parts of $\lambda$ is called the \textit{size} of $\lambda$ and denoted by $|\lambda|$. We say $\lambda$ is a partition of $n$ if its size is equal to $n$.

For complex variables $a$, $a_1,a_2,\ldots,a_k$, $q$, we define the conventional $q$-Pochhammer symbol as
\begin{align*}
(a)_n = (a;q)_n := \prod\limits_{i=0}^{n-1}(1-aq^i)\quad\text{and}\quad(a_1,a_2,\ldots,a_k;q)_n := \prod\limits_{i=1}^{k}(a_i)_n,
\end{align*}
where $n\in\mathbb{Z}_{\ge 0}\cup\{\infty\}$, and $|q|<1$ here and throughout the paper.

Slater’s celebrated list of Rogers--Ramanujan type identities \cite{Slater1952} occupies a central position in the theory of $q$-series and integer partitions. Compiled in the early 1950s, it consists of 130 series--product identities with deep connections to partition theory, modular forms, and basic hypergeometric series. Slater’s original proofs relied on Bailey pair techniques and, as a consequence, do not provide combinatorial interpretations of the identities. Since then, there have been sporadic efforts to interpret certain identities from Slater’s list combinatorially. However, these interpretations are typically tailored to individual identities and do not extend in a systematic way to other identities in the list. A central difficulty in such attempts lies in giving a natural combinatorial interpretation of the series side of these identities. 

A major advance in this direction was made by Andrews, who introduced the framework of \emph{Separable Integer Partition (SIP)} classes \cite[Section $2$]{AndrewsSIP}. This framework provides a structural approach for obtaining combinatorial interpretations of various $q$-series by decomposing partitions into a finite ``basis'' part together with a freely extendable tail consisting of multiples of a fixed modulus.

We consider the first G\"{o}llnitz--Gordon identity, which also appears as identity (36) in Slater's list.
\begin{equation}\label{GG1}
\sum_{n\ge0}\frac{q^{n^2}(-q;q^2)_n}{(q^2;q^2)_n}
=
\frac{1}{(q,q^4,q^7;q^8)_\infty}.
\end{equation}
It is immediate that the product side generates partitions into parts congruent to $\pm 1$ or $4$ modulo $8$. The combinatorial interpretation of the series side, however, is more subtle. G\"ollnitz, and independently Gordon, showed that it enumerates partitions in which the difference between parts is at least $2$, with the stronger condition that the difference between even parts is at least $4$.

In \cite{AndrewsSIP}, Andrews showed that the series side of \eqref{GG1} arises naturally as the generating function of a separable integer partition (SIP) class. Moreover, in the same work he demonstrated that several other classical partition identities, including Schur's 1926 theorem, admit interpretations within the SIP framework. Thus, SIP classes provide a unified combinatorial mechanism for interpreting a wide range of $q$-series identities.

The aim of this paper is twofold. First, we use the SIP framework to obtain generalizations of the series sides of several identities from Slater's list (see Table \ref{tab:slater-identities} below). Second, we apply $q$-hypergeometric transformations to these series to derive alternative expressions, which in certain special cases yield infinite products. Our work provides a starting point for a broader combinatorial understanding of Rogers--Ramanujan type identities using the SIP framework. 
\renewcommand{\arraystretch}{3}
\setlength{\tabcolsep}{8pt}
\setlength{\aboverulesep}{0pt}
\setlength{\belowrulesep}{0pt}
{\small\begin{longtable}{>{\raggedright\arraybackslash}p{11.5cm}
>{\centering\arraybackslash}p{1.5cm}
>{\centering\arraybackslash}p{2.5cm}}

\toprule

\textbf{Identity} & \textbf{Equation} & \textbf{Section}\\

\midrule
\endfirsthead

\toprule
\textbf{Identity} & \textbf{Equation} & \textbf{Section} \\
\midrule
\endhead
\endfoot

\bottomrule
\caption{Identities from Slater's list considered in this paper, together with their equation numbers in the list and the sections in which their generalizations are developed.}
\label{tab:slater-identities}
\endlastfoot

$\displaystyle
\sum_{n=0}^{\infty}\frac{(-1)^{n}(-q;q^2)_{n}q^{n^2}}{(q^4;q^4)_{n}}
=(q;q^2)_{\infty}(q^2;q^4)_{\infty}
$
& (4) & Section~\ref{sec:SlaterList4}\\

$\displaystyle
\sum_{n=0}^{\infty}\frac{(-q;q^2)_{n}q^{n^2}}{(q^4;q^4)_{n}}
=\dfrac{(q^3;q^6)^2_\infty(q^6;q^6)_\infty(-q;q^2)_\infty}{(q^2;q^2)_\infty}
$
& (25) & Section~\ref{sec:SlaterList4} \\

$\displaystyle
\sum_{n=0}^{\infty}\frac{(q;q^2)_{2n}q^{4n^2}}{(q^4;q^4)_{2n}}
=\dfrac{(q^5,q^7,q^{12};q^{12})_\infty}{(q^4;q^4)_\infty}
$
& (51) & Section~\ref{sec:SlaterList4} \\

$\displaystyle
\sum_{n=0}^{\infty}\frac{(q;q^2)_{2n+1}q^{4n(n+1)}}{(q^4;q^4)_{2n+1}}
=\dfrac{(q,q^{11},q^{12};q^{12})_\infty}{(q^4;q^4)_\infty}
$
& (55) & Section~\ref{sec:SlaterList4}\\

$\displaystyle
\frac{(q;q)_{\infty}}{(-q;q)_{\infty}}
\sum_{n=0}^{\infty}\frac{(-q;q)_{n}q^{\frac{n(n+1)}{2}}}{(q;q)_{n}}
=(q,q^3,q^4;q^4)_{\infty}
$
& (8) & Section~\ref{sec:SlaterList5} \\
$\displaystyle
\sum_{n=0}^{\infty}\frac{(-q;q)_{n}}{(q;q)_{n}}q^{\frac{n(n-1)}{2}}=\frac{(-q;q)_{\infty}}{(q;q)_{\infty}}\left((q,q^3,q^4;q^4)_{\infty}+(q^2,q^2,q^4;q^4)_{\infty}\right)$ & (13) & Section~\ref{sec:SlaterList5}\\

$\displaystyle
\sum_{n=0}^{\infty}\frac{(-1)^nq^{n(3n-2)}}{(q^4;q^4)_{n}(-q;q^2)_{n}}
=\frac{(q,q^4,q^5;q^5)_{\infty}}{(q^2;q^2)_{\infty}}
$
& (15) & Section~\ref{sec:SlaterList6}\\

$\displaystyle
\sum_{n=0}^{\infty}\frac{(-1)^nq^{3n^2}}{(q^4;q^4)_{n}(-q;q^2)_{n}}
=\frac{(q^2,q^3,q^5;q^5)_{\infty}}{(q^2;q^2)_{\infty}}
$
& (19) & Section~\ref{sec:SlaterList6} \\

$\displaystyle
\sum_{n=0}^{\infty}\frac{(-1)^{n}q^{n(2n+1)}}{(q^2;q^2)_{n}(-q;q^{2})_{n+1}}
=(q;q^2)_{\infty}(-q^2;q^2)_{\infty}
$
& (5) & Section~\ref{sec:SlaterList7} \\
\end{longtable}}

The rest of the paper is organized as follows. In Section~\ref{Sec:preliminary}, we review the necessary background on SIP classes and present the identities used throughout the paper. In Sections~\ref{sec:SlaterList4}, \ref{sec:SlaterList5}, and \ref{sec:SlaterList6}, we study the identities listed above, grouping them according to similarities in their partition interpretations. Finally, in Section~\ref{sec:SlaterList7}, we discuss the results from a combinatorial perspective and suggest directions for further investigation of Slater's list.

\section{Preliminary and Background}\label{Sec:preliminary}
We start with the definition of an SIP class. Let $P$ be a class of integer partitions and let $k$ be a fixed positive integer. The class $P$ is called a \emph{separable integer partition class} (SIP class) with modulus $k$ if there exists a subset $B\subset P$, called the \emph{basis}, satisfying the following properties:

\begin{enumerate}
\item For every $n\ge1$, the number of partitions in $B$ with $n$ parts is finite.
\item Every partition $\lambda=(\lambda_1,\dots,\lambda_n)\in P$ can be written uniquely in the form
\[
\lambda_i=b_i+\pi_i \qquad (1\le i\le n),
\]
where $(b_1,\dots,b_n)\in B$ and $(\pi_1,\dots,\pi_n)$ is a partition into $n$ nonnegative parts each divisible by $k$.
\item Conversely, every partition obtained in this manner belongs to $P$.
\end{enumerate}
For SIP classes, their generating function can be obtained as follows.
\begin{theorem}[Andrews~\cite{AndrewsSIP}]
If $P$ is an SIP class with basis $B$ and  modulus $k$,
$$\sum_{\lambda\in{P}}q^{|\lambda|}=\sum_{n=0}^{\infty}\frac{B(n;q)}{(q^k;q^k)_n},$$
where $B(n;q)$ is the generating function for partitions in $B$ with length $n$.
\end{theorem}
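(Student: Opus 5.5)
The plan is to sort partitions in $P$ by their number of parts $n$ and use the defining bijection of an SIP class one length at a time. Write $P_n$ for the partitions in $P$ with exactly $n$ parts, $B_n$ for the partitions in $B$ with exactly $n$ parts, and $D_n$ for the set of sequences $(\pi_1,\dots,\pi_n)$ with $\pi_1\ge\cdots\ge\pi_n\ge 0$ and every $\pi_i\equiv 0 \pmod k$. Properties (2) and (3) in the definition say precisely that
\[
\Phi_n : B_n\times D_n \to P_n,\qquad \bigl((b_i),(\pi_i)\bigr)\mapsto (b_i+\pi_i)
\]
is a bijection. Property (2) gives that every $\lambda\in P_n$ is in the image and that the decomposition is unique, so $\Phi_n$ is surjective and injective. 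Property (3) gives that the image lies in $P_n$. Since the length is preserved, we may work with $n$ fixed.

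Next, sizes add: $|\Phi_n(b,\pi)|=|b|+|\pi|$. Hence
\[
\sum_{\lambda\in P_n}q^{|\lambda|}=B(n;q)\sum_{\pi\in D_n}q^{|\pi|}.
\]
For this product to be a legitimate formal power series identity, each coefficient has to be finite. Property (1) makes $B(n;q)$ a polynomial in $q$ (or at least a finite sum of monomials), so the Cauchy product is well defined. The second factor is computed by writing $\pi_i=k\mu_i$. Then $\mu$ runs over partitions into at most $n$ parts, with zeros allowed, and the classical generating function gives
\[
\sum_{\pi\in D_n}q^{|\pi|}=\frac{1}{(q^k;q^k)_n}.
\]
If one wants to derive this rather than quote it, use conjugation: the parts of the conjugate partition are at most $n$, so its generating function is $\prod_{j=1}^n (1-q^{kj})^{-1}$.

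Finally, sum over $n\ge 0$. The case $n=0$ is the empty partition, with $B(0;q)=1$ if the empty partition is included. Since $P$ is the disjoint union of the $P_n$, this gives the stated formula. The only point requiring care, and I expect it to be the main (mild) obstacle, is convergence: the sum over $n$ must be a well-defined formal power series in $q$. This holds because each partition in $P$ of size $N$ has at most $N$ parts, so only the terms with $n\le N$ contribute to the coefficient of $q^N$. (Every part $\lambda_i=b_i+\pi_i$ is a positive integer, so $|\lambda|\ge n$.) Also, for each fixed $n$ the set $P_n$ has finitely many elements of any given size, because of property (1). So the rearrangement of the sum is justified.
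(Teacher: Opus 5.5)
Your proof is correct. The paper quotes this result from Andrews without proof, and the argument behind it is exactly yours: the length-by-length bijection $B_n\times D_n\to P_n$ combined with $\sum_{\pi\in D_n}q^{|\pi|}=1/(q^k;q^k)_n$, which is what the paper's Young-diagram picture of stripping off columns of width $k$ illustrates. One small remark: your finiteness checks are automatic, since any set of partitions has only finitely many members of each size and a Cauchy product of formal power series is always defined, so property (1) is not needed for the identity itself; it only makes $B(n;q)$ a polynomial.
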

SIP classes can be better understood through Young diagram. The \textit{Young diagram} of a partition $\lambda$ is a way of representing $\lambda$ graphically where the parts $\lambda_i$ of $\lambda$, called \textit{cells}, are arranged in left-justified rows with $\lambda_i$ cells in the $i$-th row. For example, the Young diagram of the partition $(5,4,4,3)$ is
\ytableausetup{boxsize=0.5cm}
\begin{center}
\begin{ytableau}
*(gray) & *(gray) & *(gray) & *(gray) &  \\
*(gray) & *(gray) & *(gray) & *(gray)\\
*(gray) & *(gray) & *(gray) & *(gray)\\
*(gray) & *(gray) & *(gray) & *(gray)\\
 &  & \\
\end{ytableau}
\end{center}
There is a unique largest square that can be fit into the Young diagram of a partition where one corner of the square aligns with the top left corner of the diagram. This unique square is called the Durfee square. In the above example, the Durfee square is of size $4\times 4$ and shaded gray in the Young diagram.

We now use the partition interpretation of \eqref{GG1} as an example to demonstrate the construction of an SIP class. Let $\mathcal{A}$ be the set of partitions $\lambda=(\lambda_1,\lambda_2,\ldots,\lambda_n)$ such that
$$\lambda_i-\lambda_{i+1}\geq2\quad\text{and}\quad\lambda_{i}-\lambda_{i+1}\geq3\quad\text{if $\lambda_i$ is even.}$$
It was shown by Andrews~\cite{AndrewsSIP} that $\mathcal{A}$ is a SIP class of modulus $2$. Giving a partition in $\mathcal{A}$ and its Young diagram, we subtract columns of width $2$ till the remained parts reach the minimal gap condition.

\begin{center}
\ytableausetup{mathmode,boxframe=normal,boxsize=1.2em}
\begin{ytableau}
 \empty  & \empty & \empty & *(gray)\empty & *(gray)\empty & \empty & \empty & *(gray)\empty & *(gray)\empty & \empty & \empty & \empty \\
 \empty & \empty & \empty & *(gray)\empty & *(gray)\empty & \empty & \empty & *(gray)\empty & *(gray)\empty \\
 \empty & \empty & \empty & *(gray)\empty & *(gray)\empty\\
\empty \\
\end{ytableau}
$\Longrightarrow$
\ydiagram{8,5,3,1}
\hspace{1cm}
\ydiagram{4,4,2}
\end{center}
Let $\beta$ be the remained partition and $\mu$ be the partition consists of those subtracted columns. We can see that the $\beta$ satisfies
$$
3\geq\beta_i-\beta_{i+1}\geq2\quad\text{if $\beta_i$ is odd},\quad
4\geq\beta_i-\beta_{i+1}\geq3\quad\text{if $\beta_i$ is even},
$$
while $\mu$ is a partition into non-negative multiples of $2$. This decomposition is unique for each $\lambda\in\mathcal{A}$, and conversely, giving such a pair of $\beta$ and $\mu$, we can always recover a partition $\lambda\in\mathcal{A}$ 
by taking $\lambda_i=\beta_i+\mu_i$. Thus, we have shown that $\mathcal{A}$ is an SIP class of modulus $2$, and the partitions share the same conditions as $\beta$ form the basis. All the SIP classes in the sequel are constructed and can be verified in the same manner. 

We shall also need the $q$-binomial (or Gaussian) coefficients, which are defined as
\begin{align*}
{a+b\brack b}_q    
:= \begin{cases}
\dfrac{(q)_{a+b}}{(q)_a(q)_b},&\text{if}\, a,b\ge 0,\\\mbox{}\\
0,&\text{otherwise}.\end{cases}    
\end{align*}
It is well-known that the $q$-binomial coefficient ${a+b\brack b}_q$ is the generating function for partitions into at most $b$ parts each of size at most $a$ (see \cite[Chapter $3$]{A}), and for integers $0\le m\le n$, we have the following recurrence relations of $q$-binomial coefficients.
\begin{align*}
{n\brack m}_q &= {n-1\brack m}_q + q^{n-m}{n-1\brack m-1}_q
\end{align*}
and
\begin{align*}
{n\brack m}_q &= {n-1\brack m-1}_q + q^m{n-1\brack m}_q.
\end{align*}
We now give a completed list of identities we need for the rest of the paper. We begin by stating Jacobi's triple product identity \cite[p. $21$, $(2.2.10)$]{A}.
\begin{align*}
\sum\limits_{n=-\infty}^{\infty}z^nq^{n^2} = (q^2,-zq,-z^{-1}q;q^2)_{\infty}.\numberthis\label{eq21}
\end{align*}
We then state a finite version of $q$-binomial theorem (replacing $z$ by $-z$ and $N$ by $n$ in \cite[p. $36$, $(3.3.6)$]{A}) which is as follows.
\begin{align*}
(-z;q)_{n}=\sum_{k=0}^{n}z^{k}q^{\binom{k}{2}}{n\brack k}_{q}.\numberthis\label{eq22}
\end{align*}
We next state Watson's $q$-analogue of Whipple's theorem \cite[p. $81$, $(4.1.3)$]{AndrewsBerndt2009}. If $\alpha$, $\beta$, $\gamma$, $\delta$, and $\epsilon$ are complex numbers such that $\beta\gamma\delta\epsilon\neq 0$, and if $N$ is any non-negative integer, then
\begin{align*}
\sum\limits_{n=0}^{\infty}&\dfrac{(\alpha,q\sqrt{\alpha},-q\sqrt{\alpha},\beta,\gamma,\delta,\epsilon,q^{-N};q)_n}{(q,\sqrt{\alpha},-\sqrt{\alpha},\frac{\alpha q}{\beta},\frac{\alpha q}{\gamma},\frac{\alpha q}{\delta},\frac{\alpha q}{\epsilon},\alpha q^{N+1};q)_n}\left(\frac{\alpha^2q^{N+2}}{\beta\gamma\delta\epsilon}\right)^n\\
&= \dfrac{(\alpha q,\frac{\alpha q}{\delta\epsilon};q)_N}{(\frac{\alpha q}{\delta},\frac{\alpha q}{\epsilon};q)_N}\sum\limits_{n=0}^{\infty}\dfrac{(\frac{\alpha q}{\beta\gamma},\delta,\epsilon,q^{-N};q)_n}{(q,\frac{\alpha q}{\beta},\frac{\alpha q}{\gamma},\frac{\delta\epsilon q^{-N}}{\alpha};q)_n}q^n.\numberthis\label{eq23}
\end{align*}
We now state an identity from Ramanujan's Lost Notebook Part I \cite[p. $233$, $(9.4.3)$]{AndrewsBerndt2005}.
\begin{align*}
\sum\limits_{n=0}^{\infty}\dfrac{(-1)^nq^{n(n+1)/2}}{(-q;q)_n} = \sum\limits_{n=0}^{\infty}q^{n(3n+1)/2}(1-q^{2n+1}).\numberthis\label{eq24}    
\end{align*}
Next, we state a result from Fine's book \cite[p. $24$, $(20.21)$]{Fine88}.
\begin{align*}
\sum\limits_{n=0}^{\infty}\dfrac{(qt)^n}{(q,bq;q)_n} = \dfrac{1}{(bq,tq;q)_{\infty}}\sum\limits_{r=0}^{\infty}(-b)^r\dfrac{(tq;q)_r}{(q;q)_r}q^{\frac{r^2+r}{2}}.\numberthis\label{eq25}   
\end{align*}
Lastly, we state a transformation due to Sears from Gasper and Rahman's classic text on basic hypergeometric series \cite[p. $71$, $(3.2.5)$]{GasperRahman2004}.
\begin{align*}
\sum\limits_{k=0}^{\infty}\dfrac{(q^{-n},a,b;q)_k}{(q,d,e;q)_n}\left(\frac{deq^n}{ab}\right)^k = \dfrac{(\frac{e}{a};q)_n}{(e;q)_n}\sum\limits_{k=0}^{\infty}\dfrac{(q^{-n},a,\frac{d}{b};q)_k}{(q,d,\frac{aq^{1-n}}{e};q)_k}q^k.\numberthis\label{eq26}\\
\end{align*}

\section{Partitions with restricted gap mod $4$}\label{sec:SlaterList4}
In Slater's list, Equation $(4)$, Equation $(25)$, Equation $(51)$, and Equation $(55)$ state that
\begin{equation}\label{eq:SlaterList4}
\sum_{n=0}^{\infty}\frac{(-1)^{n}(-q;q^2)_{n}q^{n^2}}{(q^4;q^4)_{n}}=(q;q^2)_{\infty}(q^2;q^4)_{\infty},    
\end{equation}
\begin{equation}\label{eq:SlaterList25}
\sum_{n=0}^{\infty}\frac{(-q;q^2)_{n}q^{n^2}}{(q^4;q^4)_{n}}=\dfrac{(q^3;q^6)^2_\infty(q^6;q^6)_\infty(-q;q^2)_\infty}{(q^2;q^2)_\infty}, 
\end{equation}
\begin{equation}\label{eq:SlaterList53}
\sum_{n=0}^{\infty}\frac{(q;q^2)_{2n}q^{4n^2}}{(q^4;q^4)_{2n}}=\dfrac{(q^5,q^7,q^{12};q^{12})_\infty}{(q^4;q^4)_\infty},
\end{equation}
\begin{equation}\label{eq:SlaterList55}
\sum_{n=0}^{\infty}\frac{(q;q^2)_{2n+1}q^{4n(n+1)}}{(q^4;q^4)_{2n+1}}=\dfrac{(q,q^{11},q^{12};q^{12})_\infty}{(q^4;q^4)_\infty}.  
\end{equation}
We first introduce the partitions associated with the sum sides of these four identities. Let $\mathcal{P}_4$ be the set of strict partitions satisfying the following gap conditions.
\begin{enumerate}
    \item $\lambda_i-\lambda_{i+1}\equiv2\mod4$ if they are both odd;
    \item $\lambda_i-\lambda_{i+1}\equiv0\mod4$ if they are both even;
    \item $\lambda_{i}-\lambda_{i+1}\equiv3\mod4$ if they have different parity;
    \item $\lambda_{i}\equiv1\ \text{or}\ 2\mod4$ if it is the smallest part.
\end{enumerate}
Let $o(\lambda)$ and $e(\lambda)$ be the number of odd parts and even parts in $\lambda$, respectively. We will first show that
\begin{equation}
\sum_{\lambda\in\mathcal{P}_4}x^{o(\lambda)}y^{e(\lambda)}q^{|\lambda|}=\sum_{n=0}^{\infty}\frac{(-yq/x;q^2)_{n}x^{n}q^{n^2}}{(q^4;q^4)_{n}}.
\end{equation}
So, the sum sides of~\eqref{eq:SlaterList4} and~\eqref{eq:SlaterList25} are the special cases of $x=y=-1$ and $x=y=1$, respectively. Meanwhile, Let $\mathcal{P}_{4,e}$ and $\mathcal{P}_{4,o}$ be the set of partitions in $\mathcal{P}_{4}$ with even and odd length, respectively. We will also show that
\begin{equation}\label{eq:P4eGenPre}
\sum_{\lambda\in\mathcal{P}_{4,e}}x^{o(\lambda)}y^{e(\lambda)}q^{|\lambda|}=\sum_{n=0}^{\infty}\frac{(-yq/x;q^2)_{2n}x^{2n}q^{4n^2}}{(q^4;q^4)_{2n}},
\end{equation}
\begin{equation}\label{eq:P4oGenPre}
\sum_{\lambda\in\mathcal{P}_{4,o}}x^{o(\lambda)}y^{e(\lambda)}q^{|\lambda|}=\sum_{n=0}^{\infty}\frac{(-yq/x;q^2)_{2n+1}x^{2n+1}q^{4n^2+4n+1}}{(q^4;q^4)_{2n+1}}.
\end{equation}
So, the sum side of \ref{eq:SlaterList53} is the special case of \ref{eq:P4eGenPre} with $x=y=-1$, while the sum side of \ref{eq:SlaterList55} is the special case of \ref{eq:P4oGenPre} with $x=y=-1$ and then multiplied by $q^{-1}$.
It's clear that $\mathcal{P}_4$ is an SIP class with modulus $4$. Let $\mathcal{B}_4$ be the basis, then it consists of partitions such that
\begin{enumerate}
    \item $\lambda_i-\lambda_{i+1}=2$ if they are both odd;
    \item $\lambda_i-\lambda_{i+1}=4$ if they are both even;
    \item $\lambda_{i}-\lambda_{i+1}=3$ if they have different parity;
    \item $\lambda_{i}=1\ \text{or}\ 2$ if it is the smallest part.
\end{enumerate}
Let $B_{4}(n,h):=B_4(n,h;x,y,q)$ be the generating function of partitions in $\mathcal{B}_4$ with length $n$ and largest part $h$, where $x$ and $y$ keep track of the number of odd parts and even parts, respectively.
\begin{theorem}
The initial values of $B_4(n,h)$ are
\begin{equation}\label{eq:InitialB4}
B_{4}(1,h)=\begin{cases}
xq,     &  \text{if $h=1$},\\
yq^2,   &  \text{if $h=2$},\\
0,      &  \text{otherwise}.
\end{cases}
\end{equation}
\end{theorem}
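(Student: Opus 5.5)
This is a direct check from the definition of the basis $\mathcal{B}_4$. The plan is to specialize its four defining conditions to partitions of length one and read off the generating function.

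When $n=1$, a partition $\lambda=(\lambda_1)\in\mathcal{B}_4$ has no consecutive pairs. Conditions (1)--(3), which constrain the differences $\lambda_i-\lambda_{i+1}$, are therefore vacuous. The only remaining constraint is condition (4): the single part $\lambda_1$ is the smallest part, so $\lambda_1\in\{1,2\}$.

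Hence $\mathcal{B}_4$ contains exactly two partitions of length one, namely $(1)$ and $(2)$. Their largest parts are $h=1$ and $h=2$ respectively. For every other $h$ there is no partition of length one and largest part $h$ in $\mathcal{B}_4$, so $B_4(1,h)=0$.

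It remains to attach the weights. The partition $(1)$ has one odd part, no even parts and size $1$, so it contributes $x^{1}y^{0}q^{1}=xq$. The partition $(2)$ has no odd parts, one even part and size $2$, so it contributes $yq^2$. This gives exactly \eqref{eq:InitialB4}.

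There is no real obstacle here. The only point to state explicitly is that the gap conditions are vacuous for a single part, so the smallest-part condition alone determines the basis elements of length one.
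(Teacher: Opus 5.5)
Your proposal is correct and matches the paper's approach: the paper states these initial values without proof, treating them as immediate from the definition of $\mathcal{B}_4$. Your observation that the gap conditions are vacuous for a single part, so condition (4) alone forces $\lambda_1\in\{1,2\}$ with weights $xq$ and $yq^2$, is exactly the check the paper leaves implicit.
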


\begin{theorem}
For any integers $n\geq1$ and $h\geq1$,
\begin{equation}\label{eq:B4EvenRec}
B_4(n,2h)=\frac{yq}{x}B_4(n,2h-1),    
\end{equation}
\begin{equation}\label{eq:B4OddRec}
B_4(n,2h-1)=xq^{2h-1}B_4(n-1,2h-3)+yq^{2h}B_4(n-1,2h-5).
\end{equation}
\end{theorem}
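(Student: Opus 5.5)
Both recurrences come from a bijective description of the basis $\mathcal{B}_4$. The gap conditions of $\mathcal{B}_4$ pin down each part once the part above it and its parity are chosen. So a basis partition $\lambda=(\lambda_1,\dots,\lambda_n)$ is determined by its largest part together with the sequence of parities. I read off the recurrences by looking at the largest part $\lambda_1$ and the second part $\lambda_2$, setting $B_4(n,h)=0$ whenever $h\le 0$.

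\emph{Even recurrence \eqref{eq:B4EvenRec}.} Let $\lambda\in\mathcal{B}_4$ have length $n$ and largest part $2h$.
\begin{itemize}
\item If $n=1$, the initial values \eqref{eq:InitialB4} give the claim directly: $B_4(1,2)=yq^2=\frac{yq}{x}\cdot xq$, and both sides vanish for $h\ge2$.
\item If $n\ge2$, then $\lambda_2$ is either odd, forcing $\lambda_2=2h-3$ (mixed-parity gap $3$), or even, forcing $\lambda_2=2h-4$ (even--even gap $4$).
\end{itemize}
Define the map $\lambda_1=2h\mapsto 2h-1$, leaving all other parts fixed. Against an odd $\lambda_2=2h-3$ the new gap is $2$, which is the odd--odd requirement. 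Against an even $\lambda_2=2h-4$ the new gap is $3$, which is the mixed requirement. The smallest-part condition is untouched since $n\ge2$.

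Conversely, take a basis partition with largest part $2h-1$. Its second part is $2h-3$ or $2h-4$, and raising the top part to $2h$ restores a valid basis partition. So the map is a bijection between partitions of length $n$ with largest part $2h$ and those with largest part $2h-1$. It changes the weight by replacing $yq^{2h}$ with $xq^{2h-1}$, i.e. by the factor $yq/x$, which is \eqref{eq:B4EvenRec}.

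\emph{Odd recurrence \eqref{eq:B4OddRec}.} Let $\lambda$ have largest part $2h-1$ and length $n\ge2$. Deleting $\lambda_1$ leaves a basis partition of length $n-1$, since all remaining conditions are inherited. Its largest part is $2h-3$ (odd, gap $2$) or $2h-4$ (even, gap $3$). Conversely, prepending $2h-1$ to any basis partition of length $n-1$ with largest part $2h-3$ or $2h-4$ gives a valid one. Hence
\[
B_4(n,2h-1)=xq^{2h-1}\bigl(B_4(n-1,2h-3)+B_4(n-1,2h-4)\bigr).
\]
Applying \eqref{eq:B4EvenRec} to the second term (valid for length $n-1\ge1$) gives $B_4(n-1,2h-4)=\frac{yq}{x}B_4(n-1,2h-5)$. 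This turns $xq^{2h-1}\cdot\frac{yq}{x}$ into $yq^{2h}$, which yields \eqref{eq:B4OddRec}. When $2h-4\le 0$ both sides of that substitution are zero under the convention above, so small $h$ is harmless.

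\emph{Main obstacle: the case $n=1$ of \eqref{eq:B4OddRec}.} The deletion argument needs $n\ge2$, so for $n=1$ the recurrence needs a convention for $B_4(0,\cdot)$. The natural choice is $B_4(0,h)=1$ if $h=-1$ and $0$ otherwise, reflecting that the smallest part $1$ behaves as if it sat above a phantom part $-1$. This reproduces $B_4(1,1)=xq$. However, it would also give $B_4(1,3)=yq^4$, which is wrong, since $B_4(1,3)=0$ by \eqref{eq:InitialB4}. So I will state \eqref{eq:B4OddRec} for $n\ge2$, with the $n=1$ row supplied by the initial values \eqref{eq:InitialB4}, and \eqref{eq:B4EvenRec} for all $n\ge1$. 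I expect this to be the intended reading, because the initial values are listed separately.
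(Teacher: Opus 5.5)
Your proof is correct and follows the same route as the paper. For \eqref{eq:B4EvenRec} you lower the largest part from $2h$ to $2h-1$; for \eqref{eq:B4OddRec} you delete $\lambda_1$ to get $xq^{2h-1}\bigl(B_4(n-1,2h-3)+B_4(n-1,2h-4)\bigr)$ and then apply \eqref{eq:B4EvenRec}. Your write-up is also more careful than the paper's: its intermediate display has the misprint $xq^{2h}$ instead of $xq^{2h-1}$, and it never treats $n=1$, where (as you note) \eqref{eq:B4OddRec} involves the undefined $B_4(0,\cdot)$ and must be replaced by \eqref{eq:InitialB4}, which is exactly how the paper uses it in the next theorem.
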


\begin{proof}
We start with \eqref{eq:B4EvenRec}. Given a partition $\lambda=(\lambda_1,\lambda_2,\ldots,\lambda_{\#(\lambda)})$ in $\mathcal{B}_4$ with length $n$ and $\lambda_1=2h$. Then, $\lambda_2$ can be either $2h-4$ or $2h-3$. If we subtract $1$ from $\lambda_1$, the resulted partition will have largest part being odd, and the gap between $\lambda_1-1$ and $\lambda_2$ also satisfies the desired conditions. Thus, we end up with a partition in $\mathcal{B}_4$ with length $n$ and largest part $2h-1$. Meanwhile, by changing the largest part from $2h$ to $2h-1$, the number of odd parts is increased by $1$, and the number of even parts is decreased by $1$, thus we need the factor $yq/x$.

For \eqref{eq:B4OddRec}, since the largest part is $2h-1$, then the second largest part can be either $2h-3$ or $2h-4$. This shows that
$$B_4(n,2h-1)=xq^{2h-1}B_4(n-1,2h-3)+xq^{2h}B_4(n-1,2h-4).$$
Applying \eqref{eq:B4EvenRec} to rewrite $B_4(n-1,2h-4)$, we finish the proof.
\end{proof}

\begin{theorem}
For any integers $n\geq1$ and $h\geq1$,
\begin{equation}\label{eq:Bnh1}
B_{4}(n,2n+2h-1)=x^{n-h}y^{h}q^{n^2+h^2+2h}{n-1\brack h}_{q^2},
\end{equation}
\begin{equation}\label{eq:Bnh2}
B_{4}(n,2n+2h)=x^{n-h-1}y^{h+1}q^{n^2+h^2+2h+1}{n-1\brack h}_{q^2}.
\end{equation}
\end{theorem}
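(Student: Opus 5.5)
The proof is by induction on the length $n$, combining the recurrences \eqref{eq:B4EvenRec} and \eqref{eq:B4OddRec} with the second $q$-binomial recurrence (base $q^2$). I would first prove \eqref{eq:Bnh1} for every integer $h$, and then read off \eqref{eq:Bnh2} from \eqref{eq:B4EvenRec}.

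The statement has to be strengthened slightly so the induction closes. I would prove \eqref{eq:Bnh1} and \eqref{eq:Bnh2} for all integers $h$ (in particular $h=0$ and negative $h$), using the convention that ${n-1\brack h}_{q^2}=0$ for $h<0$ or $h>n-1$. This convention matches the combinatorics. A partition in $\mathcal{B}_4$ of length $n$ has largest part at least $2n-1$, since the smallest part is $\ge 1$ and each gap is $\ge 2$. Hence $B_4(n,2n+2h-1)=0$ for $h<0$, and likewise $B_4(n,2n+2h)=0$ for $h<0$. At the other end, the largest part is at most $2+4(n-1)=4n-2$, which gives vanishing for $h>n-1$. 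For larger $h$ the vanishing is automatic from the binomial convention once the induction is in place.

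\textbf{Base case $n=1$.} Compare with \eqref{eq:InitialB4}. For $h=0$, \eqref{eq:Bnh1} gives $xq$ and \eqref{eq:Bnh2} gives $yq^{2}$, as required. For $h\ne 0$ the right-hand sides contain ${0\brack h}_{q^2}=0$, matching the zero values of $B_4(1,\cdot)$.

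\textbf{Inductive step for \eqref{eq:Bnh1}.} Assume the claims for $n-1$. Applying \eqref{eq:B4OddRec} with largest part $2n+2h-1$ gives
\[
B_4(n,2n+2h-1)=xq^{2n+2h-1}B_4\bigl(n-1,2(n-1)+2h-1\bigr)+yq^{2n+2h}B_4\bigl(n-1,2(n-1)+2(h-1)-1\bigr).
\]
Substitute the induction hypothesis \eqref{eq:Bnh1} at $(n-1,h)$ and $(n-1,h-1)$. Collecting exponents, both terms carry the common factor $x^{n-h}y^hq^{n^2+h^2+2h}$. The first term leaves $q^{2h}{n-2\brack h}_{q^2}$, since $(n-1)^2+h^2+2h+2n+2h-1=n^2+h^2+4h$. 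The second leaves ${n-2\brack h-1}_{q^2}$, since $(n-1)^2+(h-1)^2+2(h-1)+2n+2h=n^2+h^2+2h$. Thus everything reduces to
\[
{n-1\brack h}_{q^2}={n-2\brack h-1}_{q^2}+q^{2h}{n-2\brack h}_{q^2},
\]
which is the second recurrence from Section~\ref{Sec:preliminary} with $q\mapsto q^2$. Under the zero convention it holds for all integers $h$.

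\textbf{Deducing \eqref{eq:Bnh2}.} This is immediate from \eqref{eq:B4EvenRec} with $2h\mapsto 2n+2h$: multiplying \eqref{eq:Bnh1} by $yq/x$ gives exactly \eqref{eq:Bnh2}.

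\textbf{Main obstacle.} The only delicate point is the boundary bookkeeping. At $h=0$ the second term of \eqref{eq:B4OddRec} refers to a largest part $2n-5$, below the minimum $2n-3$ for length $n-1$. I must check that this term vanishes and that \eqref{eq:B4OddRec} itself remains valid when the referenced largest parts are small, including $n=2$. This is why I prove the statement for all integers $h$ with the zero convention, rather than only for $h\ge1$ as stated.
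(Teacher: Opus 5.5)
Your proposal is correct and follows the paper's own proof. Like the paper, you verify that the closed form for $B_4(n,2n+2h-1)$ satisfies \eqref{eq:B4OddRec} via ${n-1\brack h}_{q^2}={n-2\brack h-1}_{q^2}+q^{2h}{n-2\brack h}_{q^2}$, match the $n=1$ initial values, and obtain \eqref{eq:Bnh2} by multiplying by $yq/x$ from \eqref{eq:B4EvenRec}. Your extension to all integers $h$ with the zero convention is a worthwhile tightening, since the base case, and the later use in Theorem~\ref{thm:P4Gen}, actually sit at $h=0$, outside the stated range $h\ge 1$.
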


\begin{proof}
We first prove \eqref{eq:Bnh1}. Let the right-hand side of \eqref{eq:Bnh1} be $R_{4}(n,2n+2h-1)$, note that
\begin{align*}
R_4(n,2n+2h-1)=&x^{n-h}y^{h}q^{n^2+h^2+2h}{n-1\brack h}_{q^2}\\
=&x^{n-h}y^{h}q^{n^2+h^2+2h}\left(q^{2h}{n-2\brack h}_{q^2}+{n-2\brack h-1}_{q^2}\right)\\
=&x^{n-h}y^{h}q^{n^2+h^2+4h}{n-2\brack h}_{q^2}+x^{n-h}y^{h}q^{n^2+h^2+2h}{n-2\brack h-1}_{q^2}\\
=&xq^{2n+2h-1}R_4(n-1,2n+2h-3)+yq^{2n+2h}R_4(n-1,2n+2h-5),
\end{align*}
so~\eqref{eq:Bnh1} satisfies the recurrence \eqref{eq:B4OddRec}. By checking the initial value, we see that \eqref{eq:Bnh1} is the desired solution. By \eqref{eq:B4EvenRec}, \eqref{eq:Bnh2} follows immediately. So we finish the proof.
\end{proof}

\begin{theorem}\label{thm:P4Gen}
The generating function of $\mathcal{P}_{4,e}$, $\mathcal{P}_{4,o}$, and $\mathcal{P}_4$ are given by
\begin{equation}\label{eq:P4eGen}
\sum_{\lambda\in\mathcal{P}_{4,e}}x^{o(\lambda)}y^{e(\lambda)}q^{|\lambda|}=\sum_{n=0}^{\infty}\frac{(-yq/x;q^2)_{2n}x^{2n}q^{4n^2}}{(q^4;q^4)_{2n}},
\end{equation}
\begin{equation}\label{eq:P4oGen}
\sum_{\lambda\in\mathcal{P}_{4,o}}x^{o(\lambda)}y^{e(\lambda)}q^{|\lambda|}=\sum_{n=0}^{\infty}\frac{(-yq/x;q^2)_{2n+1}x^{2n+1}q^{4n^2+4n+1}}{(q^4;q^4)_{2n+1}},
\end{equation}
\begin{equation}\label{eq:P4gen}
\sum_{\lambda\in\mathcal{P}_4}x^{o(\lambda)}y^{e(\lambda)}q^{|\lambda|}=\sum_{n=0}^{\infty}\frac{(-yq/x;q^2)_{n}x^{n}q^{n^2}}{(q^4;q^4)_{n}}.    
\end{equation}
\end{theorem}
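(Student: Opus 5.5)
We will combine Andrews' SIP theorem with the explicit formulas \eqref{eq:Bnh1} and \eqref{eq:Bnh2} for the basis. Since $\mathcal{P}_4$ is an SIP class with modulus $4$ and basis $\mathcal{B}_4$, and adding multiples of $4$ changes neither the parity of any part nor the length, the refined version of Andrews' theorem gives
\[
\sum_{\lambda\in\mathcal{P}_4}x^{o(\lambda)}y^{e(\lambda)}q^{|\lambda|}=\sum_{n\ge0}\frac{B_4(n)}{(q^4;q^4)_n},\qquad B_4(n):=\sum_{h}B_4(n,h).
\]
The same holds for $\mathcal{P}_{4,e}$ and $\mathcal{P}_{4,o}$ after restricting to even or odd $n$. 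Hence \eqref{eq:P4eGen} and \eqref{eq:P4oGen} follow from \eqref{eq:P4gen} by splitting the sum by the parity of $n$. For $n=2m$ we have $q^{n^2}=q^{4m^2}$, and for $n=2m+1$ we have $q^{n^2}=q^{4m^2+4m+1}$. It therefore suffices to prove, for each $n\ge1$,
\[
B_4(n)=x^nq^{n^2}(-yq/x;q^2)_n,
\]
with $B_4(0)=1$.

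First we determine which largest parts occur. A basis partition of length $n$ has smallest part $1$ or $2$, and each gap is $2$, $3$ or $4$. Tracking parity, one checks that the largest part is either $2n+2h-1$ or $2n+2h$ with $0\le h\le n-1$. This matches the support of ${n-1\brack h}_{q^2}$ in \eqref{eq:Bnh1}–\eqref{eq:Bnh2}. The case $h=0$ needs a separate check, since the earlier theorem is stated for $h\ge1$. The partition $(2n-1,2n-3,\dots,1)$ gives $B_4(n,2n-1)=x^nq^{n^2}$, which is \eqref{eq:Bnh1} at $h=0$. Then \eqref{eq:B4EvenRec} gives \eqref{eq:Bnh2} at $h=0$. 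Alternatively, \eqref{eq:Bnh1} can be read as valid for $h\ge0$, since the recurrence argument and the initial values cover it.

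Summing the two formulas over $h$ gives
\[
B_4(n)=\sum_{h=0}^{n-1}x^{n-h}y^hq^{n^2+h^2+2h}\left(1+\frac{yq}{x}\right){n-1\brack h}_{q^2}=x^nq^{n^2}\left(1+\frac{yq}{x}\right)\sum_{h=0}^{n-1}\left(\frac{yq}{x}\right)^h q^{h^2+h}{n-1\brack h}_{q^2}.
\]
Now apply the finite $q$-binomial theorem \eqref{eq22} with base $q^2$ and $z=yq^3/x$, using $q^{2\binom{h}{2}}z^h=(y/x)^hq^{h^2+2h}$. The sum equals $(-yq^3/x;q^2)_{n-1}$. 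Multiplying by $1+yq/x$ gives $(-yq/x;q^2)_n$, which is the claim.

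The main obstacle is the bookkeeping in the first step: verifying that the parametrization by $(n,h)$ exhausts all basis partitions, and settling the boundary case $h=0$, where the earlier theorem was stated only for $h\ge1$. The exponent matching in \eqref{eq22} is routine.
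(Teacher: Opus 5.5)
Your proposal is correct and follows essentially the paper's route: sum \eqref{eq:Bnh1} and \eqref{eq:Bnh2} over $h$, factor out $1+yq/x$, and apply \eqref{eq22} with base $q^2$ and $z=yq^3/x$. The only differences are minor: you work with general length $n$ and split by parity at the end, whereas the paper treats even and odd lengths separately and then adds them; and you explicitly justify the $h=0$ case and why the weights $x^{o(\lambda)}y^{e(\lambda)}$ survive the SIP decomposition, both of which the paper passes over silently.
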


\begin{proof}
Since $\mathcal{P}_4$ is an SIP class with modulus $4$, so are $\mathcal{P}_{4,e}$ and $\mathcal{P}_{4,o}$. Note that
\begin{align*}\label{eq}
\sum_{\lambda\in\mathcal{P}_4}x^{o(\lambda)}y^{e(\lambda)}q^{|\lambda|}=&1+\sum_{n=1}^{\infty}\sum_{h=0}^{\infty}\frac{B_4(2n,4n+2h-1)+B_4(2n,4n+2h)}{(q^4;q^4)_{2n}}\\
=&1+\sum_{n=1}^{\infty}\frac{x^{2n}q^{4n^2}}{(q^4;q^4)_{2n}}\sum_{h=0}^{\infty}\left((y/x)^{h}q^{h^2+2h}{2n-1\brack h}+(y/x)^{h+1}q^{h^2+2h+1}{2n-1\brack h}\right)\\
=&1+\sum_{n=1}^{\infty}\frac{x^{2n}q^{4n^2}(1+yq/x)}{(q^4;q^4)_{2n}}\sum_{h=0}^{\infty}(y/x)^hq^{h^2+2h}{2n-1\brack h}\\
=&1+\sum_{n=1}^{\infty}\frac{x^{2n}q^{4n^2}(1+yq/x)(-yq^3/x;q^2)_{2n-1}}{(q^4;q^4)_{2n}}\\
=&\sum_{n=0}^{\infty}\frac{x^{2n}q^{4n^2}(-yq/x;q^2)_{2n}}{(q^4;q^4)_{2n}},
\end{align*}
\begin{align*}
\sum_{\lambda\in\mathcal{P}_{4,o}}x^{o(\lambda)}y^{e(\lambda)}q^{|\lambda|}=&\sum_{n=0}^{\infty}\sum_{h=0}^{\infty}\frac{B_4(2n+1,4n+2h+1)+B_4(2n,4n+2h+2)}{(q^4;q^4)_{2n+1}}\\
=&\sum_{n=0}^{\infty}\frac{x^{2n+1}q^{4n^2+4n+1}}{(q^4;q^4)_{2n+1}}\sum_{h=0}^{\infty}\left((y/x)^{h}q^{h^2+2h}{2n\brack h}+(y/x)^{h+1}q^{h^2+2h+1}{2n\brack h}\right)\\
=&\sum_{n=0}^{\infty}\frac{x^{2n+1}q^{4n^2+4n+1}(1+yq/x)}{(q^4;q^4)_{2n+1}}\sum_{h=0}^{\infty}(y/x)^hq^{h^2+2h}{2n\brack h}\\
=&\sum_{n=0}^{\infty}\frac{x^{2n+1}q^{4n^2+4n+1}(1+yq/x)(-yq^3/x;q^2)_{2n}}{(q^4;q^4)_{n}}\\
=&\sum_{n=0}^{\infty}\frac{x^{2n+1}q^{4n^2+4n+1}(-yq/x;q^2)_{2n+1}}{(q^4;q^4)_{2n+1}}.
\end{align*}
So, we conclude that
\begin{align*}
\sum_{\lambda\in\mathcal{P}_4}x^{o(\lambda)}y^{e(\lambda)}q^{|\lambda|}=&\sum_{\lambda\in\mathcal{P}_{4,e}}x^{o(\lambda)}y^{e(\lambda)}q^{|\lambda|}+\sum_{\lambda\in\mathcal{P}_{4,o}}x^{o(\lambda)}y^{e(\lambda)}q^{|\lambda|}\\
=&\sum_{n=0}^{\infty}\frac{x^{2n}q^{4n^2}(-yq/x;q^2)_{2n}}{(q^4;q^4)_{2n}}+\sum_{n=0}^{\infty}\frac{x^{2n+1}q^{4n^2+4n+1}(-yq/x;q^2)_{2n+1}}{(q^4;q^4)_{2n+1}}\\
=&\sum_{n=0}^{\infty}\frac{x^nq^{n^2}(-yq/x;q^2)_{n}}{(q^4;q^4)_{n}}.
\end{align*}
We finish the proof. 
\end{proof}

Observe that the RHS of \eqref{eq:P4gen} is the same when $(x,q)\mapsto (-x,-q)$. 
Next, we substitute $(q,\alpha,\beta,\delta)\rightarrow (q^2,y,-y,-yq/x)$ and let $\gamma,\epsilon,N\longrightarrow\infty$ in \eqref{eq23} to obtain 
\begin{equation}\label{eq317}
\sum_{\lambda\in\mathcal{P}_4}x^{o(\lambda)}y^{e(\lambda)}q^{|\lambda|}=\frac{(-xq;q^2)_{\infty}}{(yq^2;q^2)_{\infty}}\left(1+(1+y)\sum\limits_{n=1}^{\infty}\frac{(-1)^n(1-yq^{4n})(y^2q^4;q^4)_{n-1}(-yq/x;q^2)_nx^nq^{3n^2}}{(q^4;q^4)_n(-xq;q^2)_n}\right).
\end{equation}
Now we are ready to present the sum-product identities related to $\mathcal{P}_4$.
\begin{theorem}\label{thm36}
\begin{equation}\label{eq319}
\sum_{\lambda\in\mathcal{P}_4}x^{o(\lambda)}(-1)^{e(\lambda)}q^{|\lambda|}=\sum_{n=0}^{\infty}\frac{(q/x;q^2)_{n}x^nq^{n^{2}}}{(q^4;q^4)_{n}}=\frac{(-xq;q^2)_{\infty}}{(-q^2;q^2)_{\infty}}.   
\end{equation}
\end{theorem}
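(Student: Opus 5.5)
The first equality in \eqref{eq319} is just the specialization $y=-1$ of Theorem~\ref{thm:P4Gen}. Substituting $y=-1$ into \eqref{eq:P4gen} turns $(-yq/x;q^2)_n$ into $(q/x;q^2)_n$. On the left it turns $y^{e(\lambda)}$ into $(-1)^{e(\lambda)}$.

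For the product side I will specialize the transformation \eqref{eq317} at $y=-1$, and everything rests on the factor $(1+y)$ in front of the inner sum. Every summand of that sum (for $n\ge1$) is a finite product in $y$. Here $(y^2q^4;q^4)_{n-1}$ becomes $(q^4;q^4)_{n-1}$ and $(1-yq^{4n})$ becomes $(1+q^{4n})$. The remaining factors $(-yq/x;q^2)_n$ and $1/\bigl((q^4;q^4)_n(-xq;q^2)_n\bigr)$ are regular at $y=-1$.

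The weight $q^{3n^2}$ makes the series converge uniformly for $y$ near $-1$ and $|q|<1$. So the inner sum is analytic at $y=-1$, and multiplying it by $(1+y)=0$ kills it. The bracket then collapses to $1$. The prefactor $(-xq;q^2)_\infty/(yq^2;q^2)_\infty$ becomes $(-xq;q^2)_\infty/(-q^2;q^2)_\infty$, which is the claimed product.

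The main obstacle is the legitimacy of \eqref{eq317} itself at $y=-1$. It came from \eqref{eq23} with $\alpha=y$ and $\beta=-y$, so $\sqrt{\alpha}$ and the denominators must be checked. The denominator parameters are $\sqrt{\alpha}$, $-\sqrt{\alpha}$ and $\alpha q^2/\beta=-q^2$, and they stay nonzero and away from $q^{-2k}$ at $\alpha=-1$. The limiting process $\gamma,\epsilon,N\to\infty$ commutes with this specialization, since both sides are analytic in $y$ near $-1$. Hence I can derive \eqref{eq317} for generic $y$ and pass to the limit $y\to-1$.

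If this becomes delicate, the fallback is a direct proof at $x$ generic and $y=-1$. I would expand $(q/x;q^2)_n$ by \eqref{eq22} with base $q^2$, interchange the sums, and evaluate the inner sums with Euler's identities for $1/(q^4;q^4)_\infty$-type series. The expected result is $(-xq;q^2)_\infty\,(q^2;q^4)_\infty=(-xq;q^2)_\infty/(-q^2;q^2)_\infty$.
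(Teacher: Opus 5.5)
Your proposal is correct and follows the paper's argument: the first equality is the case $y=-1$ of Theorem~\ref{thm:P4Gen}, and the product comes from setting $y=-1$ in \eqref{eq317}, where the factor $(1+y)$ kills the inner sum and leaves $(-xq;q^2)_\infty/(-q^2;q^2)_\infty$; that factor arises because $\beta=-y=1$ makes $(\beta;q^2)_n$ vanish for $n\ge1$. Your regularity checks are sound but more than the paper needs, and the fallback is unnecessary; it is also not as direct as you suggest, because $(q^4;q^4)_n=(q^2;q^2)_n(-q^2;q^2)_n$ leaves an $n$-dependent factor $(-q^2;q^2)_n$ after you interchange the sums, so the inner sums are not Euler-type.
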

\begin{proof}
Theorem \ref{thm36} follows substituting $y = -1$ in \eqref{eq317} above.    
\end{proof}

Let $x\to-1$, we get \eqref{eq:SlaterList4} immediately. Hence, this provides an extension for Equation $(4)$ in Slater's list. 

\begin{theorem}\label{thm35}
\begin{equation}\label{eq:P4x1y-1}
\sum_{\lambda\in\mathcal{P}_4}(-1)^{e(\lambda)}q^{|\lambda|}=\sum_{n=0}^{\infty}\frac{(q;q^2)_{n}q^{n^{2}}}{(q^4;q^4)_{n}}=\frac{(q^2;q^4)_{\infty}^2}{(q;q^2)_{\infty}}.   
\end{equation}
\end{theorem}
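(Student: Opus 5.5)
The plan is to obtain this as the special case $x=1$ of Theorem~\ref{thm36}, followed by a short product manipulation.

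\emph{The combinatorial equality.} Setting $x=1$ and $y=-1$ in \eqref{eq:P4gen} of Theorem~\ref{thm:P4Gen} gives
\[
\sum_{\lambda\in\mathcal{P}_4}(-1)^{e(\lambda)}q^{|\lambda|}=\sum_{n\ge0}\frac{(q;q^2)_n q^{n^2}}{(q^4;q^4)_n}.
\]
Here we use $(-yq/x;q^2)_n=(q;q^2)_n$ when $x=1$, $y=-1$. This is legitimate because \eqref{eq:P4gen} is an identity of formal power series in $q$ with polynomial coefficients in $x,y$ (for $x=1$ no negative powers of $x$ cause trouble). Equivalently, it is the middle expression of \eqref{eq319} with $x=1$.

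\emph{The product side.} Setting $x=1$ in the right-hand side of \eqref{eq319} gives $(-q;q^2)_\infty/(-q^2;q^2)_\infty$. It remains to rewrite this in the stated form using two elementary Euler-type manipulations:
\[
(-q;q^2)_\infty=\frac{(q^2;q^4)_\infty}{(q;q^2)_\infty},\qquad \frac{1}{(-q^2;q^2)_\infty}=\frac{(q^2;q^2)_\infty}{(q^4;q^4)_\infty}=(q^2;q^4)_\infty .
\]
The first follows from $(1+q^{2k+1})(1-q^{2k+1})=1-q^{4k+2}$. The second follows from $(1+q^{2k})(1-q^{2k})=1-q^{4k}$, together with the splitting $(q^2;q^2)_\infty=(q^2;q^4)_\infty(q^4;q^4)_\infty$. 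Multiplying the two gives $(q^2;q^4)_\infty^2/(q;q^2)_\infty$, as claimed.

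\emph{Main obstacle.} The only point needing care is the justification of \eqref{eq319} itself, namely the $y=-1$ specialization of \eqref{eq317}. When $y=-1$, the factor $1+y$ vanishes, so the whole inner sum in \eqref{eq317} disappears. The issue is whether that sum is well-defined at $y=-1$. The factor $(y^2q^4;q^4)_{n-1}$ stays finite, and the prefactor $1/(yq^2;q^2)_\infty$ becomes $1/(-q^2;q^2)_\infty$, which is nonsingular; so the specialization is harmless. Since Theorem~\ref{thm36} is already stated, I would simply cite it. If a self-contained check is wanted, I would instead apply \eqref{eq23} directly with $(q,\alpha,\beta,\delta)\to(q^2,-1,1,-q)$ and let $\gamma,\epsilon,N\to\infty$. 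In that case the factor $(\beta;q^2)_n=(1;q^2)_n$ kills every term with $n\ge1$ on the left-hand side, leaving only the ratio of infinite products.
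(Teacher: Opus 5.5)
Your proposal is correct and follows the paper's proof: specialize \eqref{eq317} at $(x,y)=(1,-1)$ (equivalently, \eqref{eq319} at $x=1$), and then carry out the Euler-type rewriting of $(-q;q^2)_\infty/(-q^2;q^2)_\infty$ into $(q^2;q^4)_\infty^2/(q;q^2)_\infty$, a step the paper leaves implicit. There is one small slip in your optional direct check: at $y=-1$, $x=1$ the parameter is $\delta=-yq/x=q$, not $-q$. The choice $\delta=-q$ corresponds to $x=-1$, so the Watson limit would then give Slater's Equation~$(4)$ instead of this identity.
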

\begin{proof}Theorem \ref{thm35} follows substituting $(x,y) = (1,-1)$ in \eqref{eq317} above. Theorem \ref{thm35} also follows substituting $(q,a,b,c,t)\rightarrow (q^2,-q,0,-q,q)$ in Heine's ${}_2\phi_1$ transformation in Andrews [p. 19, Corollary 2.3].
\end{proof}

\begin{theorem}\label{thm37}
\begin{equation}\label{eq:P4x-1y1}
\sum_{\lambda\in\mathcal{P}_4}(-1)^{o(\lambda)}q^{|\lambda|}=\sum_{n=0}^{\infty}\frac{(q;q^2)_{n}(-1)^nq^{n^{2}}}{(q^4;q^4)_{n}}=\frac{(q;q^6)_{\infty}(q^5;q^6)_{\infty}(q^6;q^{12})_{\infty}^{2}}{(q^2;q^6)_{\infty}(q^3;q^6)_{\infty}(q^4;q^6)_{\infty}}.   
\end{equation}
\end{theorem}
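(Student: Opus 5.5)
The plan is to reduce the statement to Slater's identity \eqref{eq:SlaterList25} by the sign symmetry noted after \eqref{eq317}, and then to reconcile the two infinite products. The first equality in \eqref{eq:P4x-1y1} comes straight from Theorem~\ref{thm:P4Gen}. Putting $(x,y)=(-1,1)$ in \eqref{eq:P4gen} gives $-yq/x=q$, so the left side becomes $\sum_{\lambda\in\mathcal{P}_4}(-1)^{o(\lambda)}q^{|\lambda|}$. The right side becomes $\sum_{n\ge0}(q;q^2)_n(-1)^nq^{n^2}/(q^4;q^4)_n$. No further work is needed for this step.

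For the product, I will not use \eqref{eq317} at $x=-1$, $y=1$: that leaves a nonterminating series with $(q^4;q^4)_{n-1}$ and $(1-q^{4n})$ factors which does not visibly collapse. Instead I use the remark that the right-hand side of \eqref{eq:P4gen} is invariant under $(x,q)\mapsto(-x,-q)$. Concretely, replace $q$ by $-q$ in the series. Then $(q;q^2)_n\mapsto(-q;q^2)_n$ and $(q^4;q^4)_n$ is unchanged. Also $(-1)^nq^{n^2}\mapsto(-1)^n(-1)^{n^2}q^{n^2}=q^{n^2}$, since $n^2\equiv n \pmod 2$. So our series at $-q$ is exactly the sum side of \eqref{eq:SlaterList25}. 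Applying Slater's identity (25) and substituting $q\mapsto -q$ back, the series equals
\[
\frac{(-q^3;q^6)_\infty^2(q^6;q^6)_\infty(q;q^2)_\infty}{(q^2;q^2)_\infty}.
\]

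It remains to rewrite this product in the stated form, which is routine bookkeeping. First use $(-q^3;q^6)_\infty=(q^6;q^{12})_\infty/(q^3;q^6)_\infty$. Then split by residues mod $6$: $(q;q^2)_\infty=(q,q^3,q^5;q^6)_\infty$ and $(q^2;q^2)_\infty=(q^2,q^4,q^6;q^6)_\infty$. After this, $(q^6;q^6)_\infty$ cancels and one factor of $(q^3;q^6)_\infty$ cancels. What remains is $(q,q^5;q^6)_\infty(q^6;q^{12})_\infty^2/\big((q^2,q^3,q^4;q^6)_\infty\big)$, which is the right-hand side of \eqref{eq:P4x-1y1}.

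The only genuine input is Slater's (25) itself, which I take from \cite{Slater1952}. If a self-contained argument were wanted, that would be the main obstacle. In that case I would first try Heine-type transformations, as in the proof of Theorem~\ref{thm35}, to turn the series into a bilateral sum. I would then aim to evaluate that bilateral sum by Jacobi's triple product \eqref{eq21} (or the quintuple product), guided by the modulus-$6$/$12$ structure of the product.
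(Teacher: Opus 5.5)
Your argument is correct, but it takes a different route from the paper's. You treat Slater's (25) as a black box. Using the $(x,q)\mapsto(-x,-q)$ invariance of \eqref{eq:P4gen}, you recognise the $(x,y)=(-1,1)$ series as the sum side of \eqref{eq:SlaterList25} evaluated at $-q$. Your sign check $(-1)^{n+n^2}=1$ and the mod-$6$ bookkeeping are both right.

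The paper instead does exactly what you declined to do: it sets $(x,y)=(-1,1)$ in \eqref{eq317}. Your worry that this leaves a non-collapsing series is mistaken. At $x=-1$, $y=1$ one has $(1-yq^{4n})(y^2q^4;q^4)_{n-1}=(q^4;q^4)_n$, $(-yq/x;q^2)_n=(-xq;q^2)_n=(q;q^2)_n$, and $(-1)^nx^n=1$. So every summand reduces to $q^{3n^2}$, and
\[
\sum_{\lambda\in\mathcal{P}_4}(-1)^{o(\lambda)}q^{|\lambda|}=\frac{(q;q^2)_\infty}{(q^2;q^2)_\infty}\Bigl(1+2\sum_{n\ge1}q^{3n^2}\Bigr)=\frac{(q;q^2)_\infty(q^6;q^6)_\infty(-q^3;q^6)_\infty^2}{(q^2;q^2)_\infty}
\]
by \eqref{eq21} with $z=1$, $q\to q^3$. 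This is the same product you reach, and the remaining rewriting is identical.

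Your proof is shorter given Slater's list. However, it makes the theorem literally a restatement of (25) under $q\mapsto -q$, adding only the $\mathcal{P}_4$ interpretation, and gives no independent derivation of the product. The paper's route is self-contained from Watson's transformation and in effect gives a new proof of Slater's (25) itself. So the ``main obstacle'' you flagged for a self-contained argument is not an obstacle: it is this two-line cancellation followed by Jacobi's triple product, with no Heine-type or bilateral-sum detour needed.
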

\begin{proof}Theorem \ref{thm37} follows substituting $(x,y) = (-1,1)$ in \eqref{eq317} above and using the substitution $z=1$ and $q\rightarrow q^3$ in \eqref{eq21}.
\end{proof}

\begin{remark}
The sum of all the even terms and odd terms in \eqref{eq:P4x1y-1} are exactly \eqref{eq:P4eGen} and \eqref{eq:P4oGen} with $(x,y) = (-1,1)$, respectively. By Equation $(53)$ and Equation $(55)$ in Slater's list, the sum of all even terms is also an infinite product, while the sum the all odd terms can be turned to one after multiplying by $q^{-1}$.
\end{remark}

\begin{theorem}\label{thm38}
\begin{equation}\label{eq321}
\sum_{n=0}^{\infty}\frac{q^{n^{2}+n}}{(q^2;q^2)_{n}}=\frac{(-q^2;q^2)_\infty}{(q^4;q^2)_\infty}\left(\frac{(q^2;q^6)_\infty(q^4;q^6)_\infty(q^6;q^6)_\infty}{1-q^2}\right)=(-q^2;q^2)_\infty.   
\end{equation}
\end{theorem}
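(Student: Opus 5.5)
Theorem \ref{thm38} has three members, and I would prove the two equalities separately. The outer equality $\sum_{n\ge0} q^{n^2+n}/(q^2;q^2)_n=(-q^2;q^2)_\infty$ is Euler's identity $\sum_{n\ge0} z^n q^{\binom{n}{2}}/(q;q)_n=(-z;q)_\infty$ with $q\to q^2$ and $z=q^2$, since $q^{2\binom n2}q^{2n}=q^{n^2+n}$. Euler's identity in turn is the limit $n\to\infty$ of the finite $q$-binomial theorem \eqref{eq22}, because ${n\brack k}_q\to 1/(q;q)_k$. So the real content of the theorem is the middle expression, which I need to show also equals $(-q^2;q^2)_\infty$.

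To explain where the middle expression comes from, I would also derive it by specializing \eqref{eq317}. With $x=1$, $y=q$ and $q\to q$, we get $(-yq/x;q^2)_n=(-q^2;q^2)_n$. Then $(-q^2;q^2)_n/(q^4;q^4)_n=1/(q^2;q^2)_n$ and $x^nq^{n^2}$ is unchanged. That gives $\sum q^{n^2}/(q^2;q^2)_n$, which is off by $q^n$; so the correct choice is $x=q$, $y=q^2$, which gives
\[
\sum_n \frac{(-q^2;q^2)_n\, q^{n^2+n}}{(q^4;q^4)_n}=\sum_n \frac{q^{n^2+n}}{(q^2;q^2)_n}.
\]
With this choice the prefactor of \eqref{eq317} is $(-q^2;q^2)_\infty/(q^4;q^2)_\infty$, which matches the displayed factor. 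The bracket becomes
\[
1+(1+q^2)\sum_{n\ge1}(-1)^n(1-q^{4n+2})\frac{(q^8;q^4)_{n-1}}{(q^4;q^4)_n}\,q^{3n^2+n}.
\]
Since $(q^8;q^4)_{n-1}=(q^4;q^4)_n/(1-q^4)$ and $(1+q^2)/(1-q^4)=1/(1-q^2)$, the bracket equals
\[
\frac{1}{1-q^2}\sum_{n\ge0}(-1)^n q^{3n^2+n}\bigl(1-q^{4n+2}\bigr).
\]
The $n=0$ term is $1-q^2$, so it is absorbed correctly.

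Writing $(-1)^nq^{3n^2+n}(1-q^{4n+2})$ as a bilateral sum (take $n\to -n-1$ in the second piece) gives $\sum_{n\in\mathbb Z}(-1)^n q^{3n^2+n}$. By Jacobi's triple product \eqref{eq21} with $q\to q^3$ and $z=-q$, this equals $(q^6,q^2,q^4;q^6)_\infty$. That establishes the first equality.

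It remains to check that the middle expression is $(-q^2;q^2)_\infty$. Since $(q^2,q^4,q^6;q^6)_\infty=(q^2;q^2)_\infty$ and $(q^4;q^2)_\infty(1-q^2)=(q^2;q^2)_\infty$, the middle expression collapses to $(-q^2;q^2)_\infty$, consistent with Euler's identity. The main obstacle I expect is bookkeeping: confirming that the substitution $(x,y)=(q,q^2)$ is legitimate in \eqref{eq317}, i.e. that the Watson limit holds for these specialized parameters, and tracking the $(y^2q^4;q^4)_{n-1}$ and $(-xq;q^2)_n$ factors correctly.
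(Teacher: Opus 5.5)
Your proposal is correct and follows the paper's own argument. You specialize \eqref{eq317} at $(x,y)=(q,q^2)$, where $(-yq/x;q^2)_n$ and $(-xq;q^2)_n$ cancel, collapse the bracket to $\frac{1}{1-q^2}\sum_{n\in\mathbb{Z}}(-1)^nq^{3n^2+n}$, and evaluate this by \eqref{eq21} with $q\to q^3$, $z=-q$; your Euler-identity check of the outer equality adds an independent confirmation of the simplification $(q^2,q^4,q^6;q^6)_\infty=(1-q^2)(q^4;q^2)_\infty$ that the paper leaves implicit.
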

\begin{proof}Theorem \ref{thm38} follows substituting $(x,y) = (q,q^2)$ in \eqref{eq317} above and using the substitution $z=-q$ and $q\rightarrow q^3$ in \eqref{eq21}.
\end{proof}

\begin{theorem}\label{thm39}
\begin{equation}\label{eq322}
\sum_{n=0}^{\infty}\frac{(-1)^nq^{n^{2}+n}}{(-q^2;q^2)_{n}}=\sum_{n=0}^{\infty}q^{3n^2+n}(1-q^{4n+2})
\end{equation}
\end{theorem}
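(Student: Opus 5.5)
The quickest route is to observe that the statement is exactly Ramanujan's identity \eqref{eq24} with $q$ replaced by $q^2$. Under $q\mapsto q^2$, the left side of \eqref{eq24} becomes $\sum_{n\ge0}(-1)^nq^{n(n+1)}/(-q^2;q^2)_n$. The right side becomes $\sum_{n\ge0}q^{n(3n+1)}(1-q^{2(2n+1)})=\sum_{n\ge0}q^{3n^2+n}(1-q^{4n+2})$, which is the claim.

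To keep with the theme of this section, I would also derive the identity from the Watson--Whipple consequence \eqref{eq317}, as for Theorems~\ref{thm36}--\ref{thm38}. This also gives it a partition interpretation in terms of $\mathcal{P}_4$.

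\textbf{Choice of $(x,y)$.} The summand in \eqref{eq:P4gen} is $(-yq/x;q^2)_n x^nq^{n^2}/(q^4;q^4)_n$. Since $(q^4;q^4)_n=(q^2;q^2)_n(-q^2;q^2)_n$, we need $(-yq/x;q^2)_n=(q^2;q^2)_n$ and $x^n=(-1)^nq^n$. This forces $x=-q$ and $y=q^2$.

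\textbf{Prefactor in \eqref{eq317}.} With $(x,y)=(-q,q^2)$ it becomes $(q^2;q^2)_\infty/(q^4;q^2)_\infty=1-q^2$.

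\textbf{The $n$-th term of the inner sum.} Several cancellations occur:
\begin{itemize}
\item $(-yq/x;q^2)_n=(q^2;q^2)_n$ cancels against $(-xq;q^2)_n=(q^2;q^2)_n$ in the denominator.
\item The signs $(-1)^n$ and $x^n=(-1)^nq^n$ combine to $q^n$.
\item $(y^2q^4;q^4)_{n-1}/(q^4;q^4)_n=(q^8;q^4)_{n-1}/(q^4;q^4)_n=1/(1-q^4)$.
\end{itemize}
Together with the outer factor $1+y=1+q^2$, the $n$-th term reduces to
\[
\frac{(1-q^{4n+2})q^{3n^2+n}}{1-q^2}.
\]

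\textbf{Assembling.} Multiplying by the prefactor $1-q^2$ gives
\[
(1-q^2)+\sum_{n\ge1}q^{3n^2+n}(1-q^{4n+2}).
\]
The leading $1-q^2$ is precisely the $n=0$ term of the right-hand side, so the identity follows.

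The only step needing care is the bookkeeping of signs and the telescoping $(q^8;q^4)_{n-1}/(q^4;q^4)_n$. There is no real obstacle; the specialization \eqref{eq24} already gives a complete proof on its own.
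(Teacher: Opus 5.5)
Your proposal is correct and follows the paper's own proof: the paper likewise sets $(x,y)=(-q,q^2)$ in \eqref{eq317} and notes that the identity is the $q\mapsto q^2$ case of \eqref{eq24}. Your bookkeeping checks out, namely the prefactor $1-q^2$, the ratio $(q^8;q^4)_{n-1}/(q^4;q^4)_n=1/(1-q^4)$ combining with $1+y$ to give $1/(1-q^2)$, and the leading $1-q^2$ serving as the $n=0$ term.
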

\begin{proof}
Theorem \ref{thm39} follows substituting $(x,y) = (-q,q^2)$ in \eqref{eq317} above. Also, note that \eqref{eq322} is $q\mapsto q^2$ in \eqref{eq24}.
\end{proof}

\section{Strict overpartitions}\label{sec:SlaterList5}

The Equation~$(8)$ in Slater's list states that
\begin{equation}\label{eq:SlaterList7}
\frac{(q;q)_{\infty}}{(-q;q)_{\infty}}\sum_{n=0}^{\infty}\frac{(-q;q)_{n}q^{\frac{n(n+1)}{2}}}{(q;q)_{n}}=(q;q^4)_{\infty}(q^3;q^4)_{\infty}(q^4;q^4)_{\infty}.  
\end{equation}
This is equivalent to
\begin{equation}\label{eq:SlaterList7Rewrite}
\sum_{n=0}^{\infty}\frac{(-q;q)_{n}q^{\frac{n(n+1)}{2}}}{(q;q)_{n}}=(-q;q)_{\infty}(-q^2;q^2)_{\infty},   
\end{equation}
which is the special case of $a\to1$ in the following theorem.
\begin{theorem}[Lebesgue's identity]
For $|q|<1$,
\begin{equation}\label{eq:qLebesgue}
\sum_{n=0}^{\infty}\frac{(-aq;q)_{n}}{(q;q)_n}q^{\frac{n(n+1)}{2}}=(-aq^2;q^2)_{\infty}(-q;q)_{\infty}.
\end{equation}
\end{theorem}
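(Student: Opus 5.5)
The plan is to prove Lebesgue's identity analytically, by expanding $(-aq;q)_n$ with the finite $q$-binomial theorem \eqref{eq22} and then interchanging the order of summation. Replacing $z$ by $aq$ in \eqref{eq22} gives
\begin{equation*}
(-aq;q)_n=\sum_{k=0}^{n}a^kq^{k}q^{\binom{k}{2}}{n\brack k}_q=\sum_{k=0}^{n}a^kq^{\frac{k(k+1)}{2}}\frac{(q;q)_n}{(q;q)_k(q;q)_{n-k}}.
\end{equation*}
Substituting this into the left-hand side of \eqref{eq:qLebesgue} cancels the $(q;q)_n$, leaving a double sum over $0\le k\le n$. I then set $n=k+m$ and use
\begin{equation*}
\tfrac{(k+m)(k+m+1)}{2}=\tfrac{k(k+1)}{2}+\tfrac{m(m+1)}{2}+km
\end{equation*}
to rewrite the series as
\begin{equation*}
\sum_{k\ge0}\frac{a^kq^{k(k+1)}}{(q;q)_k}\sum_{m\ge0}\frac{q^{\frac{m(m+1)}{2}}q^{km}}{(q;q)_m}.
\end{equation*}
The interchange is justified by absolute convergence for $|q|<1$.

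The inner sum is evaluated by Euler's identity, i.e.\ the limiting case $n\to\infty$ of \eqref{eq22}:
\begin{equation*}
\sum_{m\ge0}\frac{q^{\binom{m}{2}}z^m}{(q;q)_m}=(-z;q)_\infty .
\end{equation*}
Taking $z=q^{k+1}$, the inner sum equals
\begin{equation*}
(-q^{k+1};q)_\infty=\frac{(-q;q)_\infty}{(-q;q)_k}.
\end{equation*}
The whole expression therefore becomes
\begin{equation*}
(-q;q)_\infty\sum_{k\ge0}\frac{a^kq^{k^2+k}}{(q;q)_k(-q;q)_k}=(-q;q)_\infty\sum_{k\ge0}\frac{a^kq^{2\binom{k}{2}}(q^2)^{k}}{(q^2;q^2)_k},
\end{equation*}
where I used $(q;q)_k(-q;q)_k=(q^2;q^2)_k$ and $k^2+k=2\binom{k}{2}+2k$. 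Applying Euler's identity once more, now in base $q^2$ with $z=aq^2$, identifies the last sum as $(-aq^2;q^2)_\infty$, which gives \eqref{eq:qLebesgue}.

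The only delicate point is the bookkeeping of exponents in the reindexing step. In particular, the cross term $q^{km}$ must be absorbed entirely into the argument of the inner Euler sum so that it produces the shifted product $(-q^{k+1};q)_\infty$. Everything else is a direct application of \eqref{eq22} and its limit. Setting $a=1$ then recovers \eqref{eq:SlaterList7Rewrite}, and hence Slater's Equation $(8)$.
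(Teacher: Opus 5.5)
Your proof is correct and is essentially the paper's own argument in the proof of Theorem~\ref{thm:NewQLebesgue}: expand $(-aq;q)_n$ by the finite $q$-binomial theorem, interchange and shift $n=k+m$, evaluate the inner sum by Euler's identity as $(-q^{k+1};q)_\infty$, then finish with $(q;q)_k(-q;q)_k=(q^2;q^2)_k$ and Euler in base $q^2$. The only difference is cosmetic: the paper carries an extra length variable $z$ from the SIP interpretation and lets $z\to1$ at the end, whereas you set $z=1$ from the start.
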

We wish to give the partition interpretation for a more general series form. Let $\overline{\mathcal{S}}$ be the set of partitions satisfying the following conditions.
\begin{enumerate}
    \item $\lambda$ is a strict partition;
    \item $\lambda_i$ may be overlined if $\lambda_{i}\geq2$ and $\lambda_{i}-\lambda_{i+1}\geq2$.
\end{enumerate}
We call partitions in $\overline{\mathcal{S}}$ strict overpartitions, and let $\overline{\#}(\lambda)$ be the number of overlined parts in $\lambda$. We will prove that
\begin{equation}
\sum_{\lambda\in\overline{\mathcal{S}}}a^{\overline{\#}(\lambda)}z^{\#(\lambda)}q^{|\lambda|}=\sum_{n=0}^{\infty}\frac{(-aq;q)_{n}}{(q;q)_{n}}z^{n}q^{\frac{n(n+1)}{2}},
\end{equation}
so the sum side of \eqref{eq:qLebesgue} becomes a special case of $z\to1$.

It is straightforward that $\overline{\mathcal{S}}$ is an SIP class with modulus $1$, and the basis $\mathcal{B}_{\overline{\mathcal{S}}}$ consists of partitions such that
\begin{enumerate}
    \item $\lambda_i=2$ or $\lambda_i-\lambda_{i+1}=2$ if $\lambda_i$ is overlined;
    \item $\lambda_i=1$ or $\lambda_i-\lambda_{i+1}=1$ otherwise.
\end{enumerate}
\par Let $B_{\overline{S}}(n,h):=B_{\overline{S}}(n,h;a,z,q)$ be the generating function for partitions in $\mathcal{B}_{\overline{\mathcal{S}}}$ with length $n$ and largest part $h$, where $a$, $z$ and $q$ keep track of the number of overlined parts, the length and the weight, respectively.
\begin{theorem}
The initial values for $B_{\overline{S}}(n,h)$ are
\begin{equation}
B_{\overline{S}}(1,h)=\begin{cases}
   zq,  & \text{if $h=1$,} \\
   azq^{2},  & \text{if $h=2$,}\\
   0, & \text{otherwise.}
\end{cases}
\end{equation}
\end{theorem}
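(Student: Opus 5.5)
The plan is to enumerate directly the partitions in $\mathcal{B}_{\overline{\mathcal{S}}}$ of length $1$, using the basis conditions stated just above. A length-one partition is $\lambda=(\lambda_1)$, and since it has no next part, the smallest-part clauses of the two basis conditions are the only ones that apply. By condition (2), a non-overlined single part must equal $1$. By condition (1), an overlined single part must equal $2$. Both are admissible in $\overline{\mathcal{S}}$ itself: a part equal to $1$ may not be overlined, and a part $\geq 2$ that is the smallest part may be overlined. The last point uses the convention $\lambda_{2}=0$, so that $\lambda_1-\lambda_2\geq2$ holds.

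Consequently the only basis partitions of length $1$ are $(1)$ and $(\overline{2})$. Their weights are as follows.
\begin{itemize}
\item For $(1)$: one part, no overlines, size $1$, so the weight is $zq$.
\item For $(\overline{2})$: one part, one overline, size $2$, so the weight is $azq^{2}$.
\end{itemize}
No other value of $h$ occurs, so $B_{\overline{S}}(1,h)=0$ otherwise.

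The only step that requires care, and the main (minor) obstacle, is justifying the basis conditions for the smallest part. I would check this against the SIP decomposition. Given a one-part strict overpartition $(m)$, we subtract the column of width $1$ repeatedly, keeping the result in $\overline{\mathcal{S}}$.
\begin{itemize}
\item If $m$ is not overlined, this reduces $m$ to $1$.
\item If $m$ is overlined, the overline forces $m\geq2$, so the reduction stops at $\overline{2}$.
\end{itemize}
In each case the decomposition is unique, which confirms that $(1)$ and $(\overline{2})$ are exactly the minimal representatives. This completes the verification.
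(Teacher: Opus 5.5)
Your proposal is correct and is exactly the implicit argument behind the paper's statement, which the paper gives without proof: the only length-one basis partitions are $(1)$ and $(\overline{2})$, contributing $zq$ and $azq^{2}$. Your cross-check via the one-part decompositions $(m)=(1)+(m-1)$ and $(\overline{m})=(\overline{2})+(m-2)$ is a sound confirmation, since it gives $(zq+azq^{2})/(1-q)$, the generating function of one-part strict overpartitions.
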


\begin{theorem}
For $n\geq1$ and $h\geq2$,
\begin{equation}
B_{\overline{S}}(n,h)=zq^{h}B_{\overline{S}}(n-1,h-1)+azq^{h}B_{\overline{S}}(n-1,h-2).
\end{equation}
\end{theorem}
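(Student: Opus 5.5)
The recurrence comes from deleting the largest part, in the same way as \eqref{eq:B4OddRec} was obtained for $\mathcal{B}_4$. Take a partition $\lambda=(\lambda_1,\dots,\lambda_n)$ in $\mathcal{B}_{\overline{\mathcal{S}}}$ with length $n\geq 2$ and largest part $\lambda_1=h\geq2$. By the definition of the basis, exactly one of two situations holds: either $\lambda_1$ is not overlined, in which case $\lambda_2=h-1$; or $\lambda_1$ is overlined, in which case $\lambda_2=h-2$. (The alternatives $\lambda_1=1$ and $\lambda_1=2$ in the defining conditions only apply when $\lambda_1$ is the smallest part, which cannot happen since $n\ge2$.)

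Deleting $\lambda_1$ leaves $(\lambda_2,\dots,\lambda_n)$. The conditions defining $\mathcal{B}_{\overline{\mathcal{S}}}$ are local: they involve only a part, its overline status, and the next part. So the truncated sequence is still in $\mathcal{B}_{\overline{\mathcal{S}}}$. It has length $n-1$ and largest part $h-1$ or $h-2$ respectively.

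Conversely, start from any basis partition of length $n-1$ with largest part $h-1$ and prepend a non-overlined part $h$; the result is a valid basis partition. Likewise, start from one with largest part $h-2$ and prepend an overlined part $h$; this is valid since $h\geq2$ and the gap is $2$. These two operations invert the deletion, giving a bijection between
\[
\{\lambda\in\mathcal{B}_{\overline{\mathcal{S}}} : \#(\lambda)=n,\ \lambda_1=h\}
\]
and the disjoint union of the two smaller sets.

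Next we track the weights. Prepending $h$ multiplies by $z$ (one more part) and by $q^{h}$ (size). It also multiplies by $a$ exactly when the new part is overlined. This yields
\[
B_{\overline{S}}(n,h)=zq^{h}B_{\overline{S}}(n-1,h-1)+azq^{h}B_{\overline{S}}(n-1,h-2).
\]

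The main point requiring care is the boundary case $n=1$, together with the convention for $B_{\overline{S}}(0,\cdot)$. Set $B_{\overline{S}}(0,h)=1$ if $h=0$ and $0$ otherwise, with $B_{\overline{S}}(n,h)=0$ for negative $h$. Then for $n=1$ the recurrence reproduces the stated initial values: $h=2$ gives $azq^2$ via $B_{\overline{S}}(0,0)$, and $h\ge3$ gives $0$. The case $h=1$ is outside the range $h\geq2$, so it is covered by the initial values. For $n\ge2$, also check that $h-1$ and $h-2$ may be too small, namely $h-2=0$, or $h-1=1$ with $n-1\ge2$. In those cases the corresponding generating function vanishes, which is consistent, since no basis partition of that length has such a largest part.
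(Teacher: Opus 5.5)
Your proposal is correct and follows essentially the paper's argument: split according to whether the largest part is overlined, which forces $\lambda_2=h-2$ (overlined) or $\lambda_2=h-1$ (not overlined), and delete that part. You also make explicit what the paper leaves implicit (the inverse prepending map, reading the alternatives $\lambda_i=1$ and $\lambda_i=2$ as applying only to the smallest part, and the convention $B_{\overline{S}}(0,0)=1$, $B_{\overline{S}}(0,h)=0$ for $h\neq 0$ needed at $n=1$), and you pair the cases with $h-2$ and $h-1$ correctly, whereas the paper's wording swaps them.
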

\begin{proof}
Giving a partition in the basis with length $n$ largest part $h$, we have two cases according to the largest part being overlined or not. In the first case, the second largest part is $h-1$, while in the second case it is $h-2$. This explained the recurrence.
\end{proof}
\begin{theorem}
For any $n\geq1$ and $h\geq1$,
\begin{equation}\label{eq:BSrecurrence}
B_{\overline{S}}(n,n+h)=a^{h}z^nq^{\binom{n+1}{2}+\binom{h+1}{2}}{n\brack h}_{q}.
\end{equation}
\end{theorem}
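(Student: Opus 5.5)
We prove \eqref{eq:BSrecurrence} by induction on $n$, in the same way as \eqref{eq:Bnh1}. Write $R(n,n+h)$ for the right-hand side, with the conventions that ${n\brack h}_q=0$ when $h<0$ or $h>n$ and ${n\brack 0}_q=1$. We check that $R$ satisfies the initial values and the recurrence $B_{\overline{S}}(n,h)=zq^{h}B_{\overline{S}}(n-1,h-1)+azq^{h}B_{\overline{S}}(n-1,h-2)$. Since this recurrence lowers $n$ by one and determines $B_{\overline{S}}(n,\cdot)$ completely from $B_{\overline{S}}(n-1,\cdot)$, the two functions must then agree.

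Before the induction we pin down the range. A basis partition of length $n$ is built from its smallest part ($1$ or $2$) by adding steps of $1$ or $2$. Its largest part therefore lies between $n$ and $2n$, and equals $n+h$ exactly when $h$ of the $n$ parts are overlined. So $B_{\overline{S}}(n,n+h)$ vanishes outside $0\le h\le n$, which matches the vanishing of ${n\brack h}_q$. We verify the formula for all $h\ge 0$, including $h=0$, since the recurrence needs it. The case $h=0$ is the partition $(n,n-1,\dots,1)$, whose generating function is $z^nq^{\binom{n+1}{2}}$, in agreement with $R(n,n)$. For $n=1$, the values $R(1,1)=zq$ and $R(1,2)=azq^2$ are exactly the initial values.

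For the inductive step with $n\ge2$, expand $R(n,n+h)$ using the second recurrence ${n\brack h}_q={n-1\brack h-1}_q+q^h{n-1\brack h}_q$. The term $a^hz^nq^{\binom{n+1}{2}+\binom{h+1}{2}+h}{n-1\brack h}_q$ should equal $zq^{n+h}R(n-1,n+h-1)$, where $n+h-1=(n-1)+h$. Checking exponents: $\binom{n}{2}+\binom{h+1}{2}+n+h=\binom{n+1}{2}+\binom{h+1}{2}+h$, so this holds. The term $a^hz^nq^{\binom{n+1}{2}+\binom{h+1}{2}}{n-1\brack h-1}_q$ should equal $azq^{n+h}R(n-1,n+h-2)$, where $n+h-2=(n-1)+(h-1)$. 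The exponent on the right is $\binom{n}{2}+\binom{h}{2}+n+h=\binom{n+1}{2}+\binom{h+1}{2}$, so this also holds.

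The main obstacle is only bookkeeping. The Gaussian recurrence must be chosen so that the power $q^h$ attaches to the non-overlined branch, and the boundary cases $h=0$ and $h=n$ must be handled, where one of the two terms vanishes by the conventions above.
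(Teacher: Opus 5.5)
Your proof is correct and follows essentially the same route as the paper. Both show that the closed form satisfies $B_{\overline{S}}(n,h)=zq^{h}B_{\overline{S}}(n-1,h-1)+azq^{h}B_{\overline{S}}(n-1,h-2)$, using ${n\brack h}_q=q^h{n-1\brack h}_q+{n-1\brack h-1}_q$ and the $n=1$ initial values. Your explicit handling of $h=0$ and of the range $0\le h\le n$ is a useful addition: the paper's statement assumes $h\ge1$, yet the recurrence at $h=1$ and the later generating-function sum over $h\ge 0$ both use the $h=0$ case.
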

\begin{proof}
It suffices to check the recurrence relation and the initial values.
\begin{align*}
B_{\overline{S}}(n,n+h)=&a^{h}z^{n}q^{\binom{n+1}{2}+\binom{h+1}{2}}{n\brack h}_{q}\\
&a^{h}z^{n}q^{\binom{n+1}{2}+\binom{h+1}{2}}\left(q^h{n-1\brack h}_{q}+{n-1\brack h-1}_{q}\right)\\
=&a^{h}z^{n}q^{\binom{n+1}{2}+\binom{h+1}{2}+h}{n-1\brack h}_{q}+a^{h}z^{n}q^{\binom{n+1}{2}+\binom{h+1}{2}}{n-1\brack h-1}_{q}\\
&=zq^{n+h}\cdot a^{h}z^{n-1}q^{\binom{n}{2}+\binom{h+1}{2}}{n-1\brack h}_{q}+azq^{n+h}\cdot a^{h-1}z^{n-1}q^{\binom{n}{2}+\binom{h}{2}}{n-1\brack h-1}_{q}\\
=&zq^{n+h}B_{\overline{S}}(n-1,n+h-1)+azq^{n+h}B_{\overline{S}}(n-1,n+h-2),
\end{align*}
so it satisfies~\eqref{eq:BSrecurrence} as desired. It is straightforward to check the value for $n=1$, so we finish the proof.
\end{proof}

\begin{theorem}\label{thm:GFStrictOverpartition}
The generating function for $\overline{\mathcal{S}}$ is \begin{equation}
\sum_{\lambda\in\overline{\mathcal{S}}}a^{\overline{\#}(\lambda)}z^{\#(\lambda)}q^{|\lambda|}=\sum_{n=0}^{\infty}\frac{(-aq;q)_{n}}{(q;q)_{n}}z^nq^{\frac{n(n+1)}{2}}=(-aq)_\infty (-zq)_\infty\sum_{n\ge 0}\dfrac{(-aq)^n}{(q)_n(-zq)_n}.
\end{equation}    
\end{theorem}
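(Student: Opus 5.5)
The theorem has two equalities, and I would prove them separately.

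For the first equality I would use Andrews' SIP theorem with modulus $k=1$. It gives
\[
\sum_{\lambda\in\overline{\mathcal{S}}}a^{\overline{\#}(\lambda)}z^{\#(\lambda)}q^{|\lambda|}=1+\sum_{n\ge1}\frac{1}{(q;q)_n}\sum_{h}B_{\overline{S}}(n,h).
\]
A basis partition of length $n$ has largest part between $n$ (no overlines) and $2n$ (all overlined). So I write $h=n+j$ with $0\le j\le n$ and use the closed form \eqref{eq:BSrecurrence}. That formula is stated for $h\ge1$. The case $j=0$ is the staircase $(n,n-1,\dots,1)$ with weight $z^nq^{\binom{n+1}{2}}$, which agrees with the formula at $j=0$, so I include it and sum over $0\le j\le n$. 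The inner sum becomes
\[
z^nq^{\binom{n+1}{2}}\sum_{j=0}^{n}a^jq^{\binom{j+1}{2}}{n\brack j}_q=z^nq^{\binom{n+1}{2}}(-aq;q)_n,
\]
by the finite $q$-binomial theorem \eqref{eq22} with $z\mapsto aq$. Dividing by $(q;q)_n$ gives the middle expression.

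For the second equality I would apply Fine's identity \eqref{eq25}. I would take $b=z$ and $t=a$, so that
\[
\sum_{n\ge0}\frac{(aq)^n}{(q,zq;q)_n}=\frac{1}{(zq,aq;q)_\infty}\sum_{r\ge0}(-z)^r\frac{(aq;q)_r}{(q;q)_r}q^{\binom{r+1}{2}}.
\]
Then I replace $(a,z)$ by $(-a,-z)$. The right side becomes $\frac{1}{(-zq,-aq)_\infty}\sum_r z^r\frac{(-aq)_r}{(q)_r}q^{\binom{r+1}{2}}$, and the left side becomes $\sum_n\frac{(-aq)^n}{(q)_n(-zq)_n}$. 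Multiplying through by $(-aq)_\infty(-zq)_\infty$ gives exactly the claimed product-times-sum form.

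The main obstacle is the bookkeeping in the first step. I need to justify including $j=0$, since the $n=1$ initial values cover $h=1,2$, i.e.\ $j=0,1$. I also need the range of $h$ to be exactly $n\le h\le 2n$, so that the $q$-binomial sum is complete. Both checks follow directly from the basis description. The exponent check $\binom{j}{2}$ in \eqref{eq22} with $z=aq$ gives $a^jq^{\binom{j+1}{2}}$, which matches.
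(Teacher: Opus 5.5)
Your proposal is correct and follows essentially the same route as the paper: the SIP decomposition with modulus $1$, then the closed form for $B_{\overline{S}}(n,n+j)$ summed by the finite $q$-binomial theorem to give $(-aq;q)_n$, then Fine's identity \eqref{eq25} (effectively with $b=-z$, $t=-a$) for the last equality. You are slightly more careful than the paper: it leaves the $j=0$ staircase term and the range $n\le h\le 2n$ implicit, while you check both explicitly.
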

\begin{proof}
Since $\overline{\mathcal{S}}$ is an SIP class of modulus $1$,
\begin{align*}
\sum_{\lambda\in\overline{\mathcal{S}}}a^{\overline{\#}(\lambda)}z^{\#(\lambda)}q^{|\lambda|}=&1+\sum_{n=1}^{\infty}\sum_{h=0}^{\infty}\frac{B_{\overline{S}}(n,h)}{(q;q)_{n}}\\
=&1+\sum_{n=1}^{\infty}\frac{z^nq^{\binom{n+1}{2}}}{(q;q)_n}\sum_{h=0}^{\infty}a^{h}q^{\binom{h+1}{2}}{n\brack h}_{q}\\
=&1+\sum_{n=1}^{\infty}\frac{(-aq;q)_{n}z^nq^{\binom{n+1}{2}}}{(q;q)_n}\\
=&\sum_{n=0}^{\infty}\frac{(-aq;q)_{n}z^nq^{\binom{n+1}{2}}}{(q;q)_n}.
\end{align*}
The far right-hand side follows from \eqref{eq25}, so we finish the proof.
\end{proof}
Next we shall establish the sum-product identities related to this generating function. First, we give a new proof for the $q$-Lebesgue identity, together with a combinatorial interpretation.
\begin{theorem}\label{thm:NewQLebesgue}
\begin{equation}
\sum_{\lambda\in\overline{\mathcal{S}}}a^{\overline{\#}(\lambda)}q^{|\lambda|}=\sum_{n=0}^{\infty}\frac{(-aq;q)_{n}}{(q;q)_{n}}q^{\frac{n(n+1)}{2}}=(-aq^2;q^2)_{\infty}(-q;q)_{\infty}.
\end{equation}    
\end{theorem}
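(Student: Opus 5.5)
The first equality is Theorem~\ref{thm:GFStrictOverpartition} at $z=1$, so only the product evaluation
\[
\sum_{n\ge0}\frac{(-aq;q)_n}{(q;q)_n}q^{\binom{n+1}{2}}=(-aq^2;q^2)_\infty(-q;q)_\infty
\]
needs proof. Since the paper promises a new proof, I would use the right-hand form of Theorem~\ref{thm:GFStrictOverpartition} with $z=1$:
\[
(-aq;q)_\infty(-q;q)_\infty\sum_{n\ge0}\frac{(-aq)^n}{(q;q)_n(-q;q)_n}
=(-aq;q)_\infty(-q;q)_\infty\sum_{n\ge0}\frac{(-aq)^n}{(q^2;q^2)_n},
\]
because $(q;q)_n(-q;q)_n=(q^2;q^2)_n$.

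The key step is Euler's identity $\sum_{n\ge0}t^n/(q^2;q^2)_n=1/(t;q^2)_\infty$, the $q$-binomial theorem in base $q^2$. With $t=-aq$ it gives $1/(-aq;q^2)_\infty$. Then
\[
(-aq;q)_\infty=(-aq;q^2)_\infty(-aq^2;q^2)_\infty,
\]
so the factor $(-aq;q^2)_\infty$ cancels. What remains is $(-aq^2;q^2)_\infty(-q;q)_\infty$, as required.

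Nothing here is a real obstacle. The one point to check is the evaluation of \eqref{eq25} with $b=-1$, $t=-a$ (equivalently $z=1$), which Theorem~\ref{thm:GFStrictOverpartition} already provides. I would also note that the sums converge for $|aq|<1$, and extend to all $a$ by comparing coefficients, since both sides are power series in $q$ with polynomial coefficients in $a$.

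As a combinatorial reading, one could add that $(-q;q)_\infty$ counts the underlying strict partition. The factor $(-aq^2;q^2)_\infty$ then records the overlining data, by a bijection which the paper may develop afterwards.
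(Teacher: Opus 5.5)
Your proof is correct, but it takes a different route from the paper.

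\textbf{Your route.} You start from the far right-hand form of Theorem~\ref{thm:GFStrictOverpartition}, which rests on Fine's transformation \eqref{eq25}. At $z=1$ the collapse $(q;q)_n(-q;q)_n=(q^2;q^2)_n$ turns the sum into $\sum_n(-aq)^n/(q^2;q^2)_n=1/(-aq;q^2)_\infty$. That factor then cancels against part of $(-aq;q)_\infty$.

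\textbf{The paper's route.} The paper avoids \eqref{eq25} entirely. It returns to the SIP double sum
\[
\sum_n \frac{z^nq^{\binom{n+1}{2}}}{(q;q)_n}\sum_h a^hq^{\binom{h+1}{2}}{n\brack h}_q,
\]
interchanges the order of summation, and evaluates the inner sum over $n$ by Euler's identity $\sum_n z^nq^{\binom{n}{2}}/(q;q)_n=(-z;q)_\infty$ with $z\to zq^{h+1}$. This gives, for general $z$, the form
\[
(-zq;q)_\infty\sum_h \frac{a^hz^hq^{h^2+h}}{(q;q)_h(-zq;q)_h}.
\]
Only then does it set $z=1$. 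The same collapse $(q;q)_h(-q;q)_h=(q^2;q^2)_h$ occurs, and the other Euler identity in base $q^2$ gives $(-aq^2;q^2)_\infty$ directly.

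\textbf{Comparison.} Your argument is shorter, but its real content is Fine's identity plus Euler, which is essentially the classical analytic proof. The paper's argument uses nothing beyond Euler's two identities. It also yields the all-$z$ identity \eqref{h2} as a by-product; set against the Fine form, this gives the extra transformation recorded right after the proof. Finally, in the paper's proof the factor $(-aq^2;q^2)_\infty$ comes straight out of the sum over $h$, which counts overlined parts in the basis, rather than from a cancellation. That makes it the more suggestive of the two combinatorially.

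Your closing remark about a bijection explaining $(-aq^2;q^2)_\infty$ is speculative; the paper constructs no such bijection. The proof does not need it.
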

\begin{proof}
Letting $t\to -z/a$ and $a\to\infty$ in Cauchy's identity \cite[p. $17$, $(2.2.1)$]{A}, we have
\begin{equation}\label{eq:qBinomThmLimit}
\sum_{n=0}^{\infty}\frac{z^{n}q^{\binom{n}{2}}}{(q;q)_{n}}=(-z;q)_{\infty}.
\end{equation}
Note that, from the second line in the proof of Theorem \ref{thm:GFStrictOverpartition},
\begin{align*}
1+\sum_{n=1}^{\infty}\frac{z^nq^{\binom{n+1}{2}}}{(q;q)_n}\sum_{h=0}^{\infty}a^{h}q^{\binom{h+1}{2}}{n\brack h}_{q}
=&\sum_{n=0}^{\infty}\frac{z^nq^{\binom{n+1}{2}}}{(q;q)_n}\sum_{h=0}^{n}a^{h}q^{\binom{h+1}{2}}\frac{(q;q)_n}{(q;q)_h(q;q)_{n-h}}\\
=&\sum_{h=0}^{\infty}\frac{a^hq^{\binom{h+1}{2}}}{(q;q)_h}\sum_{n=h}^{\infty}\frac{z^{n}q^{\binom{n+1}{2}}}{(q;q)_{n-h}}\\
=&\sum_{h=0}^{\infty}\frac{a^hq^{\binom{h+1}{2}}}{(q;q)_h}\sum_{n=0}^{\infty}\frac{z^{n+h}q^{\binom{n+h+1}{2}}}{(q;q)_{n}}\\
=&\sum_{h=0}^{\infty}\frac{a^hz^hq^{2\binom{h+1}{2}}}{(q;q)_h}\sum_{n=0}^{\infty}\frac{z^{n}q^{\binom{n+1}{2}+nh}}{(q;q)_{n}}\\
\text{(by $z\to zq^{h+1}$ in~\eqref{eq:qBinomThmLimit})}\\
=&\sum_{h=0}^{\infty}\frac{a^hz^hq^{2\binom{h+1}{2}}(-zq^{h+1};q)_{\infty}}{(q;q)_h}\\
=&(-zq;q)_{\infty}\sum_{h=0}^{\infty}\frac{a^hz^hq^{2\binom{h+1}{2}}}{(q;q)_h(-zq;q)_{h}}.
\end{align*}
Let $z\to1$, it is easy to see that
$$\sum_{h=0}^{\infty}\frac{a^hq^{2\binom{h+1}{2}}}{(q;q)_h(-q;q)_{h}}=\sum_{h=0}^{\infty}\frac{a^{h}q^{h^{2}+h}}{(q^2;q^2)_{h}}=(-aq^{2};q^2)_{\infty}.$$
So, we finish the proof.
\end{proof}
Here are some observations. Note that from the proof of Theorem \ref{thm:NewQLebesgue}, we can conclude that
\begin{align}\label{h2}
(-zq;q)_\infty\sum_{h\ge 0}\dfrac{a^h z^h q^{2\binom{h+1}{2}}}{(q;q)_h(-zq;q)_h}=\sum_{n\ge 0}\dfrac{(-aq)_n}{(q)_n}z^n q^{\frac{n(n+1)}{2}}.   
\end{align}
This can also be shown by replacing $t\rightarrow t/(ab)$ and $a, b\rightarrow\infty$ in the second iteration of Heine's transformation in \cite[p. $9$, $(1.2.9)$]{AndrewsBerndt2009}. So, \eqref{h2} and \eqref{eq25} yield 
\begin{align}
(-zq;q)_\infty\sum_{h\ge 0}\dfrac{a^h z^h q^{2\binom{h+1}{2}}}{(q;q)_h(-zq;q)_h}=(-aq)_\infty (-zq)_\infty\sum_{n\ge 0}\dfrac{(-aq)^n}{(q)_n(-zq)_n}.    
\end{align}
Next, we give a new proof for this following identity, which is Equation $(13)$ from Slater's list.
\begin{theorem}
\begin{equation}\label{s13}
\sum_{n=0}^{\infty}\frac{(-q;q)_{n}}{(q;q)_{n}}q^{\frac{n(n-1)}{2}}=\frac{(-q;q)_{\infty}}{(q;q)_{\infty}}\left((q,q^3,q^4;q^4)_{\infty}+(q^2,q^2,q^4;q^4)_{\infty}\right).
\end{equation}
\end{theorem}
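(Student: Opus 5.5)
The plan is to observe that the left-hand side of \eqref{s13} is the case $a=1$, $z=q^{-1}$ of the generating function in Theorem~\ref{thm:GFStrictOverpartition}, since $z^nq^{\binom{n+1}{2}}=q^{\binom{n}{2}}$ when $z=q^{-1}$. I would then evaluate it using the transformation \eqref{h2} rather than \eqref{eq25}. Both sides of \eqref{h2} are analytic in $z$ near $z=q^{-1}$, because $|q|<1$ and the factor $q^{2\binom{h+1}{2}}$ dominates. So the specialization is legitimate, provided we keep track of the factor $(-zq;q)_h$, which becomes $(-1;q)_h$.

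Setting $a=1$, $z=q^{-1}$ in \eqref{h2} gives
\begin{equation*}
\sum_{n\ge0}\frac{(-q;q)_n}{(q;q)_n}q^{\binom{n}{2}}=(-1;q)_\infty\sum_{h\ge0}\frac{q^{h^2}}{(q;q)_h(-1;q)_h}.
\end{equation*}
Next I would use $(-1;q)_\infty=2(-q;q)_\infty$, together with $(-1;q)_h=2(-q;q)_{h-1}$ for $h\ge1$. Writing $(q;q)_h(-q;q)_{h-1}=(q^2;q^2)_h/(1+q^h)$, the right side becomes
\begin{equation*}
(-q;q)_\infty\Bigl(2+\sum_{h\ge1}\frac{q^{h^2}(1+q^h)}{(q^2;q^2)_h}\Bigr)=(-q;q)_\infty\Bigl(\sum_{h\ge0}\frac{q^{h^2}}{(q^2;q^2)_h}+\sum_{h\ge0}\frac{q^{h^2+h}}{(q^2;q^2)_h}\Bigr).
\end{equation*}
Here the constant $2$ is exactly absorbed as the two $h=0$ terms.

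Both sums are Euler-type. By \eqref{eq:qBinomThmLimit} with $q\to q^2$ and $z=q$, respectively $z=q^2$, they equal $(-q;q^2)_\infty$ and $(-q^2;q^2)_\infty$. The latter is also the last equality of Theorem~\ref{thm38}. Hence the left side equals $(-q;q)_\infty\bigl((-q;q^2)_\infty+(-q^2;q^2)_\infty\bigr)$.

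It remains to match the products on the right-hand side of \eqref{s13}. Since $(q,q^3;q^4)_\infty=(q;q^2)_\infty$ and $(q;q)_\infty=(q;q^2)_\infty(q^2;q^2)_\infty$, we get
\begin{equation*}
\frac{(q,q^3,q^4;q^4)_\infty}{(q;q)_\infty}=\frac{(q^4;q^4)_\infty}{(q^2;q^2)_\infty}=(-q^2;q^2)_\infty.
\end{equation*}
Similarly, using $(q^2;q^2)_\infty=(q^2;q^4)_\infty(q^4;q^4)_\infty$ and $(q^2;q^4)_\infty=(q;q^2)_\infty(-q;q^2)_\infty$,
\begin{equation*}
\frac{(q^2,q^2,q^4;q^4)_\infty}{(q;q)_\infty}=\frac{(q^2;q^4)_\infty}{(q;q^2)_\infty}=(-q;q^2)_\infty.
\end{equation*}
Multiplying by $(-q;q)_\infty$ recovers the expression found for the left side, which completes the argument.

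The only delicate step is the specialization $z=q^{-1}$, where $(-zq;q)_h$ has the factor $(1+1)$ at its first position. I would handle it by the explicit rewriting above, noting that no denominators vanish; everything else is routine.
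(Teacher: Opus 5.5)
Your proof is correct and is essentially the paper's argument: specialize the three-variable generating function at $a=1$, $z=q^{-1}$, rewrite $(-1;q)_n=2(-q;q)_n/(1+q^n)$ to split the series into two sums over $(q^2;q^2)_n$, evaluate each sum as a product, and then match the products.

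The only difference is the starting form and the closing identity. You begin from \eqref{h2} and finish with Euler's identity \eqref{eq:qBinomThmLimit}. The paper begins from Fine's form \eqref{eq25} and finishes with Cauchy's identity, which gives $1/(-q;q^2)_\infty+1/(-q^2;q^2)_\infty$. Your variant has the small advantage that \eqref{h2} is proved within the paper, so no external transformation is needed.
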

\begin{proof} 
Put $a=1$ and $z=q^{-1}$ in Theorem \ref{thm:GFStrictOverpartition} to obtain
\begin{align}\label{f3}
\sum_{n\ge 0}\dfrac{(-q)_n}{(q)_n}q^{\frac{n(n-1)}{2}}&=(-q)_\infty(-1)_\infty\sum_{n\geq 0}\dfrac{(-q)^n}{(q)_n(-1)_n}\notag\\
&=2(-q)_\infty^2\sum_{n\ge 0}\dfrac{(-q)^n(1+q^n)}{2(q)_n(-q)_n}\notag\\
&=(-q)_\infty^2\sum_{n\ge 0}\dfrac{(-q)^n(1+q^n)}{(q^2;q^2)_n}\notag\\
&=(-q)_\infty^2\left(\sum_{n\ge 0}\dfrac{(-1)^nq^n}{(q^2;q^2)_n}+\sum_{n\ge 0}\dfrac{(-1)^nq^{2n}}{(q^2;q^2)_n}\right).
\end{align}
Using Cauchy's identity \cite[p. $17$, $(2.2.1)$]{A} on the two sums in the right-hand side of \eqref{f3}, we obtain
\begin{align*}
\sum_{n\ge 0}\dfrac{(-q)_n}{(q)_n}q^{\frac{n(n-1)}{2}}&=(-q)_\infty^2\left(\dfrac{1}{(-q;q^2)_\infty}+\dfrac{1}{(-q^2;q^2)_\infty}\right)\\
&=(-q)_\infty^2\left(\dfrac{(-q^2;q^2)_\infty}{(-q;q)_\infty}+\dfrac{(-q;q^2)_\infty}{(-q;q)_\infty}\right)\\
&=(-q)_\infty\left((-q^2;q^2)_\infty+(-q;q^2)_\infty\right)
\end{align*}
The product side of \eqref{s13} can be written as
$$\frac{(-q;q)_{\infty}}{(q;q)_{\infty}}\left((q,q^3,q^4;q^4)_{\infty}+(q^2,q^2,q^4;q^4)_{\infty}\right)=(-q;q)_{\infty}\left((-q^2;q^2)_{\infty}+(-q;q^2)_{\infty}\right).$$
So, we finish the proof.
\end{proof}
Finally, we note the following connection with the Rogers-Ramanujan identities. Let
$$P(a,z;q) := {\displaystyle\sum\limits_{n=0}^{\infty}\dfrac{(-aq)_n}{(q)_n}z^nq^\frac{n(n+1)}{2}}.$$
Then, we can show that
\begin{align}
\lim\limits_{a\rightarrow 0}P(1/aq,a;q) = \dfrac{1}{(q,q^4;q^5)_{\infty}}    
\end{align}
and
\begin{align}
\lim\limits_{a\rightarrow 0}P(1/a,a;q) = \dfrac{1}{(q^2,q^3;q^5)_{\infty}}.   
\end{align}
Note that in our three-variable generating function, $a$ and $z$ keep track of the number of overlined parts and length, respectively. So, by letting $(a,z)\to(1/a,a)$, the exponent of $a$ will become the number of non-overlined parts. Finally, by letting $a\to0$, it means that we are counting the number of strict partitions without non-overlined parts. By definition, those are exactly partitions with parts differing by at least $2$, which explains the relation.

\section{Partitions with positional gap condition}\label{sec:SlaterList6}
The following are the Equation $(15)$ and $(19)$ on Slater's list.
\begin{equation}\label{eq:Slater15}
\sum_{n=0}^{\infty}\frac{(-1)^nq^{n(3n-2)}}{(q^4;q^4)_{n}(-q;q^2)_{n}}=\frac{(q,q^4,q^5;q^5)_{\infty}}{(q^2;q^2)_{\infty}},
\end{equation}
\begin{equation}\label{eq:Slater19}
\sum_{n=0}^{\infty}\frac{(-1)^nq^{3n^2}}{(q^4;q^4)_{n}(-q;q^2)_{n}}=\frac{(q^2,q^3,q^5;q^5)_{\infty}}{(q^2;q^2)_{\infty}}.   
\end{equation}
In this section, we consider partitions with positional gap conditions to give a combinatorial interpretation.

\subsection{SIP interpretation for Equation $(15)$}

Let $\mathcal{G}$ be the set of strict partitions such that $\lambda_{2i}-\lambda_{2i+1}$ is even for all $i$, and $\lambda_{2i}$ must be even if it is the smallest part. And for each partition $\lambda$, let $|\lambda_{o}|:=\lambda_1+\lambda_3+\lambda_5+\cdots$ be the sum of all the odd-indexed parts, let $|\lambda_{e}|:=\lambda_2+\lambda_4+\lambda_6+\cdots$ be the sum of all the even-indexed parts. Then $\mathcal{G}$ is an SIP class of modulus $2$, and the basis $\mathcal{B}_{\mathcal{G}}$ consist of partitions such that
\begin{enumerate}
\item $1\leq\lambda_{2i-1}-\lambda_{2i}\leq2$ and $\lambda_{2i}-\lambda_{2i+1}=2$ for all $i$,
\item The smallest part is $1$ or $2$, and it can only be $2$ if it's even-indexed.
\end{enumerate}
Let $B_{G}(n,h):=B_G(n,h;x,y)$ be the generating function for partitions $\lambda$ in $\mathcal{B}_{\mathcal{G}}$ with length $n$ and largest part $h$, where $x$ and $y$ keep track of $|\lambda_{o}|$ and $|\lambda_{e}|$, respectively.
\begin{theorem}
The initial values for $B_G(n,h)$ are
\begin{equation}
B_G(0,h)=\begin{cases}
    1, & \text{if $h=0$}, \\
    0, & \text{otherwise},
\end{cases}    
\end{equation}
and
\begin{equation}
B_G(1,h)=\begin{cases}
    x, & \text{if $h=1$}, \\
    x^2, & \text{if $h=2$},\\
    0, & \text{otherwise}.
\end{cases}
\end{equation}
\end{theorem}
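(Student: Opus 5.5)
The plan is to verify both formulas directly from the definition of $\mathcal{B}_{\mathcal{G}}$; no recurrence is needed at this stage.

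For $n=0$, the only partition of length $0$ is the empty partition. Its largest part is $0$ by convention, and $|\lambda_o|=|\lambda_e|=0$, so it contributes $x^0y^0=1$. This gives $B_G(0,h)=1$ if $h=0$ and $B_G(0,h)=0$ otherwise.

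For $n=1$, a partition $\lambda=(\lambda_1)$ has a single part, which is odd-indexed. Hence $|\lambda_o|=\lambda_1$, $|\lambda_e|=0$, and its weight is $x^{\lambda_1}$.

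It remains to decide which values of $\lambda_1$ occur in the basis. I would argue through the SIP decomposition of $\mathcal{G}$ rather than by quoting the basis conditions literally. A one-part partition $(m)$ is always in $\mathcal{G}$, since the positional conditions concern only the pairs $\lambda_{2i},\lambda_{2i+1}$ and an even-indexed smallest part, and neither occurs here. Subtracting the maximal column of width $2$, i.e.\ the largest even number strictly less than $m$, leaves a unique residue in $\{1,2\}$. Equivalently, one appends a virtual part $\lambda_2=0$, and condition (1), $1\le\lambda_1-\lambda_2\le 2$, forces $\lambda_1\in\{1,2\}$. Both values therefore occur, and no others do. This yields $B_G(1,1)=x$, $B_G(1,2)=x^2$, and $B_G(1,h)=0$ for every other $h$.

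The one point that needs care, and the only real obstacle, is reconciling this with condition (2) of the basis, which says the smallest part may be $2$ only if it is even-indexed. That restriction must be read as applying to genuinely even-indexed smallest parts. It reflects the requirement in $\mathcal{G}$ that such a part be even: there, reducing modulo $2$ leaves $2$, never $1$. For odd-indexed smallest parts, the bound $1\le\lambda_{2i-1}-\lambda_{2i}\le 2$ with $\lambda_{2i}=0$ governs instead. I would state this convention explicitly, because it is also what makes the later recurrence for $B_G(n,h)$ consistent with these initial values.
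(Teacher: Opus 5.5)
Your proposal is correct and takes the same approach as the paper. The paper gives no separate proof: it treats these initial values as a direct check from the definition of $\mathcal{B}_{\mathcal{G}}$ (``the initial values can be easily checked''). Your reading of basis condition (2), that an even-indexed smallest part must equal $2$ while an odd-indexed one may be $1$ or $2$, is the intended one; it parallels the paper's description of $\mathcal{B}_{\mathcal{G}'}$ and is what makes the recurrence consistent with $B_G(0,h)$.
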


\begin{theorem}
For $n\geq1$,
\begin{equation}\label{eq:BGRec}
B_G(n,h)=x^{h}y^{h-1}B_G(n-2,h-3)+x^{h}y^{h-2}B_G(n-2,h-4).
\end{equation}
\end{theorem}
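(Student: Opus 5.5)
The plan is to prove \eqref{eq:BGRec} directly from the definition of $\mathcal{B}_{\mathcal{G}}$, in the same way as the recurrences for $B_4$ and $B_{\overline{S}}$: peel off the first two parts $\lambda_1,\lambda_2$ of a basis partition. Removing two parts shifts every remaining index by $2$, so odd-indexed parts stay odd-indexed and even-indexed parts stay even-indexed. This is exactly why the weights $x^{|\lambda_o|}y^{|\lambda_e|}$ factor cleanly.

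Throughout I read the condition $\lambda_{2i}-\lambda_{2i+1}=2$ with the convention $\lambda_{n+1}=0$. This is the convention implicit in the definition of $\mathcal{G}$ ("$\lambda_{2i}$ must be even if it is the smallest part"). Under it, an even-indexed smallest part of a basis partition equals $2$, which agrees with condition (2) of the basis. The case $n=1$ is covered by the stated initial values, so the recurrence is really needed for $n\ge 2$, with $B_G(0,\cdot)$ as given. I will note this explicitly.

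Fix $n\ge2$ and $\lambda\in\mathcal{B}_{\mathcal{G}}$ with length $n$ and $\lambda_1=h$. By condition (1), $\lambda_2\in\{h-1,h-2\}$, and these give two disjoint cases.

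\emph{Case 1: $\lambda_2=h-1$.} Then $\lambda_3=\lambda_2-2=h-3$, where $\lambda_3=0$ if $n=2$.

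\emph{Case 2: $\lambda_2=h-2$.} Then $\lambda_3=h-4$.

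In each case, define $\mu=(\lambda_3,\dots,\lambda_n)$. I will check three things.
\begin{enumerate}
\item $\mu\in\mathcal{B}_{\mathcal{G}}$, it has length $n-2$, and its largest part is $h-3$ or $h-4$ respectively. Every gap condition on $\mu$ is a gap condition on $\lambda$ with the same index parity. The smallest-part condition is inherited because the parity of the index of the last part is unchanged. When $n=2$, $\mu$ is empty and the largest-part value is $0$, matching $B_G(0,0)=1$.
\item Conversely, any $\mu\in\mathcal{B}_{\mathcal{G}}$ of length $n-2$ with largest part $h-3$ (resp.\ $h-4$) extends uniquely to a valid $\lambda$ by prepending $(h,h-1)$ (resp.\ $(h,h-2)$). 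The new gaps are $1$ or $2$ and then exactly $2$, so they are admissible.
\item The weight satisfies $x^{|\lambda_o|}y^{|\lambda_e|}=x^{h}y^{\lambda_2}\cdot x^{|\mu_o|}y^{|\mu_e|}$, since $\mu$'s odd- and even-indexed parts are precisely $\lambda$'s odd- and even-indexed parts beyond the first two.
\end{enumerate}
This yields the factors $x^hy^{h-1}$ and $x^hy^{h-2}$, and summing the two cases gives \eqref{eq:BGRec}.

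The only delicate step is the boundary case $n=2$ (and small $h$). One must make sure the $\lambda_{n+1}=0$ convention is what makes the recurrence correct. Otherwise, partitions such as $(2,1)$ or $(3,1)$ would be counted in the basis, yet they are not produced by the right-hand side. I would verify this case by listing the length-$2$ basis partitions $(3,2)$ and $(4,2)$ against the recurrence; everything else is bookkeeping.
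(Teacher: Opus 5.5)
Your proof is correct and is the same argument as the paper's. Both split on $\lambda_2\in\{h-1,h-2\}$, use $\lambda_3=\lambda_2-2$, and strip off the first two parts; this preserves index parity and contributes the factor $x^{h}y^{\lambda_2}$. Your added care at the boundary tightens points the paper passes over: you adopt the convention $\lambda_{n+1}=0$, and you restrict to $n\ge 2$, since at $n=1$ the right-hand side would involve $B_G(-1,\cdot)$. It also corrects the paper's slip of writing $\lambda_3=2h-3$ where $h-3$ is meant.
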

\begin{proof}
Let $\lambda=(\lambda_1,\lambda_2,\ldots,\lambda_{\#(\lambda)})$ be a partition in $\mathcal{B}_{\mathcal{G}}$ with length $n$ and largest part $h$. Then, $\lambda_{2}$ can be either $h-1$ or $h-2$, and $\lambda_3=\lambda_2-2$ in both cases. So, the weight of $\lambda_1$ and $\lambda_2$ can be either $x^hy^{h-1}$, with $\lambda_3=2h-3$, or $x^hy^{h-2}$, with $\lambda_3=h-4$. So, we have the desired recurrence.     
\end{proof}

\begin{theorem}
\begin{equation}\label{eq:BGOdd}
B_G(2n-1,3n+h-2)=x^{\frac{3n^2-n}{2}+\frac{h^2+h}{2}}y^{\frac{3n^2-3n}{2}+\frac{h^2-h}{2}}{n\brack h}_{xy},
\end{equation}
\begin{equation}\label{eq:BGEven}
B_G(2n,3n+h)=x^{\frac{3n^2+3n}{2}+\frac{h^2+h}{2}}y^{\frac{3n^2+n}{2}+\frac{h^2-h}{2}}{n\brack h}_{xy}.
\end{equation}
\end{theorem}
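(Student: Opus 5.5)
The plan is to prove \eqref{eq:BGOdd} and \eqref{eq:BGEven} simultaneously by induction on $n$, in the same way as \eqref{eq:Bnh1} and \eqref{eq:BSrecurrence}. Write $R_G$ for the proposed right-hand sides. I will check that $R_G$ satisfies the two-step recurrence \eqref{eq:BGRec} and agrees with the initial values.

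First I will restate the recurrence in the form the combinatorics actually gives. The displayed version in the proof of \eqref{eq:BGRec} has typos, so I rederive it. If the largest part is $h$, then either $\lambda_2=h-1$ and $\lambda_3=h-3$, contributing $x^hy^{h-1}$, or $\lambda_2=h-2$ and $\lambda_3=h-4$, contributing $x^hy^{h-2}$. This gives
\[
B_G(m,h)=x^hy^{h-1}B_G(m-2,h-3)+x^hy^{h-2}B_G(m-2,h-4),
\]
valid for $m\ge 2$. I will also record the support fact, obtained from the same induction: a basis partition of length $2n-1$ has largest part in $[3n-2,4n-2]$, and one of length $2n$ has largest part in $[3n,4n]$. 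This matches the convention that ${n\brack h}_{xy}=0$ for $h<0$ or $h>n$, so every $B_G(m,H)$ is covered by the formulas.

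For the odd case, set $H=3n+h-2$. Then $B_G(2n-3,H-3)$ is the odd-case quantity with parameters $(n-1,h)$, and $B_G(2n-3,H-4)$ is the one with parameters $(n-1,h-1)$. Let
\[
A(n,h)=\tfrac{3n^2-n}{2}+\tfrac{h^2+h}{2},\qquad C(n,h)=\tfrac{3n^2-3n}{2}+\tfrac{h^2-h}{2}
\]
be the exponents of $x$ and $y$. The differences needed are
\[
A(n,h)-A(n-1,h)=3n-2=H-h,\qquad C(n,h)-C(n-1,h)=3n-3=H-1-h,
\]
\[
A(n,h)-A(n-1,h-1)=H,\qquad C(n,h)-C(n-1,h-1)=H-2.
\]
Applying ${n\brack h}_{xy}=(xy)^h{n-1\brack h}_{xy}+{n-1\brack h-1}_{xy}$ splits $R_G(2n-1,H)$ into exactly $x^Hy^{H-1}R_G(2n-3,H-3)+x^Hy^{H-2}R_G(2n-3,H-4)$. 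The even case \eqref{eq:BGEven} is the same computation with $H=3n+h$ and the exponents $\frac{3n^2+3n}{2}+\frac{h^2+h}{2}$ and $\frac{3n^2+n}{2}+\frac{h^2-h}{2}$.

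For the initial values, $n=1$ in \eqref{eq:BGOdd} gives $B_G(1,1)=x$ and $B_G(1,2)=x^2$. The case $n=0$ in \eqref{eq:BGEven} gives $B_G(0,0)=1$. Since the recurrence drops the length by $2$, these base cases determine everything, which completes the induction.

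The main obstacle is bookkeeping rather than ideas. One must match the shifted indices $(n-1,h)$ and $(n-1,h-1)$ to the two branches, and confirm that the factor $(xy)^h$ attaches to the ${n-1\brack h}$ term and not the other. One must also make sure the boundary cases ($h=0$, $h=n$) are handled by the vanishing conventions of the $q$-binomial coefficient, so that no stray terms appear at the edges of the support.
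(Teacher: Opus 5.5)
Your proposal is correct and follows the paper's proof. Both arguments show that the closed forms satisfy \eqref{eq:BGRec} by splitting ${n\brack h}_{xy}=(xy)^h{n-1\brack h}_{xy}+{n-1\brack h-1}_{xy}$, matching the two pieces to the parameter shifts $(n-1,h)$ and $(n-1,h-1)$, and checking the initial values $B_G(0,0)=1$, $B_G(1,1)=x$, $B_G(1,2)=x^2$; your restriction of the recurrence to length $m\ge 2$ and your use of the vanishing of ${n\brack h}_{xy}$ outside $0\le h\le n$ make explicit details the paper leaves implicit.
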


\begin{proof}
The initial values can be easily checked. So, it suffices to show that both of these expressions satisfy \eqref{eq:BGRec}. For \eqref{eq:BGOdd},
\begin{align*}
&B_G(2n-1,3n+h-2)\\
=&x^{\frac{3n^2-n}{2}+\frac{h^2+h}{2}}y^{\frac{3n^2-3n}{2}+\frac{h^2-h}{2}}{n\brack h}_{xy}\\
=&x^{\frac{3n^2-n}{2}+\frac{h^2+h}{2}}y^{\frac{3n^2-3n}{2}+\frac{h^2-h}{2}}\left(x^hy^{h}{n-1\brack h}_{xy}+{n-1 \brack h-1}\right)\\
=&x^{\frac{3n^2-n}{2}+\frac{h^2+3h}{2}}y^{\frac{3n^2-3n}{2}+\frac{h^2+h}{2}}{n-1\brack h}_{xy}+x^{\frac{3n^2-n}{2}+\frac{h^2+h}{2}}y^{\frac{3n^2-3n}{2}+\frac{h^2-h}{2}}{n-1 \brack h-1}\\
=&x^{3n+h-2}y^{3n+h-3}x^{\frac{3(n-1)^2-(n-1)}{2}+\frac{h^2+h}{2}}y^{\frac{3(n-1)^2-3(n-1)}{2}+\frac{h^2-h}{2}}{n-1\brack h}_{xy}\\
&+x^{3n+h-2}y^{3n+h-4}x^{\frac{3(n-1)^2-(n-1)}{2}+\frac{h^2-h}{2}}y^{\frac{3(n-1)^2-3(n-1)}{2}+\frac{(h-1)^2-(h-1)}{2}}{n-1\brack h-1}_{xy}\\
=&x^{3n+h-2}y^{3n+h-3}B_{G}(2n-3,3n+h-5)+x^{3n+h-2}y^{3n+h-4}B_{G}(2n-3,3n+h-6).
\end{align*}
For \eqref{eq:BGEven},
\begin{align*}
&B_G(2n,3n+h)\\
=&x^{\frac{3n^2+3n}{2}+\frac{h^2+h}{2}}y^{\frac{3n^2+n}{2}+\frac{h^2-h}{2}}{n\brack h}_{xy}\\
=&x^{\frac{3n^2+3n}{2}+\frac{h^2+h}{2}}y^{\frac{3n^2+n}{2}+\frac{h^2-h}{2}}\left(x^hy^{h}{n-1\brack h}_{xy}+{n-1 \brack h-1}\right)\\
=&x^{\frac{3n^2+3n}{2}+\frac{h^2+3h}{2}}y^{\frac{3n^2+n}{2}+\frac{h^2+h}{2}}{n-1\brack h}_{xy}+x^{\frac{3n^2+3n}{2}+\frac{h^2+h}{2}}y^{\frac{3n^2+n}{2}+\frac{h^2-h}{2}}{n-1 \brack h-1}\\
=&x^{3n+h}y^{3n+h-1}x^{\frac{3(n-1)^2-3(n-1)}{2}+\frac{h^2+h}{2}}y^{\frac{3(n-1)^2+(n-1)}{2}+\frac{h^2-h}{2}}{n-1\brack h}_{xy}\\
&+x^{3n+h}y^{3n+h-2}x^{\frac{3(n-1)^2-3(n-1)}{2}+\frac{h^2-h}{2}}y^{\frac{3(n-1)^2+(n-1)}{2}+\frac{(h-1)^2-(h-1)}{2}}{n-1\brack h-1}_{xy}\\
=&x^{3n+h}y^{3n+h-1}B_{G}(2n-2,3n+h-3)+x^{3n+h}y^{3n+h-2}B_{G}(2n-2,3n+h-4).
\end{align*}
So, we finish the proof.
\end{proof}

\begin{theorem}\label{thm:GFGxy}
\begin{equation}\label{eq:GFGxy}
\sum_{\lambda\in\mathcal{G}}x^{|\lambda_{o}|}y^{|\lambda_{e}|}=\sum_{n=0}^{\infty}\frac{x^{\frac{3n^2-n}{2}}y^{\frac{3n^2-3n}{2}}}{(-x;xy)_n(x^2y^2;x^2y^2)_{n}}.
\end{equation}
\end{theorem}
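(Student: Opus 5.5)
We would prove \eqref{eq:GFGxy} by following the template of Theorems~\ref{thm:P4Gen} and \ref{thm:GFStrictOverpartition}: split $\mathcal{G}$ by length, compute the basis contribution from the closed forms \eqref{eq:BGOdd}--\eqref{eq:BGEven}, and multiply by the generating function of the freely added columns. The one new feature is that $x$ and $y$ weight odd-indexed and even-indexed parts separately. So the factor $1/(q^k;q^k)_N$ of Andrews' theorem must be replaced by a refined product, which we work out first.

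\textbf{Step 1 (refined tail).} In the SIP decomposition $\lambda_i=b_i+\pi_i$ of a partition of length $N$, the tail $\pi$ is a sum of columns of width $2$. A column of height $m$ adds $2$ to $\lambda_1,\dots,\lambda_m$. This contributes $x^{m+1}y^{m-1}$ when $m$ is odd and $x^my^m$ when $m$ is even. Hence the tail generating function for length $N$ is $\prod_{m=1}^{N}(1-w_m)^{-1}$ with these weights $w_m$. This gives
\[
\frac{1}{(x^2;x^2y^2)_n\,(x^2y^2;x^2y^2)_n}\ \ (N=2n),\qquad \frac{1}{(x^2;x^2y^2)_n\,(x^2y^2;x^2y^2)_{n-1}}\ \ (N=2n-1).
\]

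\textbf{Step 2 (basis sums).} For fixed length we sum \eqref{eq:BGOdd} or \eqref{eq:BGEven} over $h$. Writing $x^{(h^2+h)/2}y^{(h^2-h)/2}=x^h(xy)^{\binom h2}$, the finite $q$-binomial theorem \eqref{eq22} with $q\to xy$ and $z\to x$ gives $\sum_h x^{(h^2+h)/2}y^{(h^2-h)/2}{n\brack h}_{xy}=(-x;xy)_n$ in both cases. Using $(x^2;x^2y^2)_n=(x;xy)_n(-x;xy)_n$, the length $2n$ and length $2n-1$ contributions become
\[
\frac{x^{\frac{3n^2+3n}{2}}y^{\frac{3n^2+n}{2}}}{(x;xy)_n(x^2y^2;x^2y^2)_n},\qquad \frac{x^{\frac{3n^2-n}{2}}y^{\frac{3n^2-3n}{2}}}{(x;xy)_n(x^2y^2;x^2y^2)_{n-1}},
\]
together with $1$ for the empty partition.

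\textbf{Step 3 (recombination).} We then pair the odd-length term of index $n$ with the even-length term of index $n-1$, as was done in the proof of Theorem~\ref{thm:P4Gen}. Over the common denominator $(x;xy)_n(x^2y^2;x^2y^2)_{n-1}$, the numerator is $1-x^ny^{n-1}+x^ny^{n-1}=1$, so the sum telescopes to a single series.

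\textbf{Main obstacle.} The delicate point is matching this series to the exact form in \eqref{eq:GFGxy}, in particular the factor $(-x;xy)_n$ versus $(x;xy)_n$ in the denominator and the precise exponents. Two things must be rechecked carefully. First, the weights $w_m$ of the columns under the odd/even-index bookkeeping. Second, the normalization of $B_G$, since the initial value $B_G(1,2)=x^2$ already reflects a parity convention on the smallest part. If a discrepancy remains after this bookkeeping, our next move would be to apply \eqref{eq22} or a Heine-type rearrangement to convert between the $(x;xy)_n$ and $(-x;xy)_n$ forms. Failing that, we would recheck the stated form against small-length cases, e.g.\ $n\le2$, by direct enumeration of $\mathcal{G}$.
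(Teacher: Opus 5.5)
\textbf{Review.} Your Steps 1 and 2 are correct and match the paper's computation: the column weights, the refined tail products, the summation over $h$ via \eqref{eq22}, and the factorization $(x^2;x^2y^2)_n=(x;xy)_n(-x;xy)_n$. Write $O_n$ and $E_n$ for your length-$(2n-1)$ and length-$2n$ contributions.

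The gap is in Step 3. Pairing $O_n$ with $E_{n-1}$ is a valid computation, but it yields
\[
\sum_{n\ge1}\frac{x^{3\binom{n}{2}}y^{(n-1)(3n-2)/2}}{(x;xy)_n\,(x^2y^2;x^2y^2)_{n-1}}.
\]
This is a correct expression for the generating function, but it is a different series from the target. Its exponents differ, and the factor is $(x^2y^2;x^2y^2)_{n-1}$ rather than $(x^2y^2;x^2y^2)_n$. You never connect it to the stated form. The pairing that reaches the target is the one the paper uses, namely $O_n$ with $E_n$ (same $n$). Since $E_n=O_n\cdot x^{2n}y^{2n}/(1-x^{2n}y^{2n})$, you get $O_n+E_n=O_n/(1-x^{2n}y^{2n})$. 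This promotes $(x^2y^2;x^2y^2)_{n-1}$ to $(x^2y^2;x^2y^2)_n$ and gives exactly $x^{(3n^2-n)/2}y^{(3n^2-3n)/2}/\bigl((x;xy)_n(x^2y^2;x^2y^2)_n\bigr)$. The empty partition supplies the $n=0$ term.

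Your ``main obstacle,'' $(x;xy)_n$ versus $(-x;xy)_n$, is not a bookkeeping error on your side. It cannot be removed by \eqref{eq22} or by a Heine-type transformation, because the displayed statement is misprinted. The paper's own proof ends with $(x;xy)_n$ in the denominator. Equivalently, it has $(-x;xy)_n$ in the numerator over $(x^2;x^2y^2)_n$. The specialization $x\to zq$, $y\to q/z$ giving $(zq;q^2)_n$ in \eqref{eq:GFGzq} is also consistent only with $(x;xy)_n$.

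Your proposed small-case check settles this immediately. Partitions in $\mathcal{G}$ of length at most $2$ contribute $1+x/\bigl((1-x)(1-x^2y^2)\bigr)$, which is the $n=0$ term plus the $n=1$ term with $(x;xy)_1=1-x$. With $1+x$ in its place, the coefficient of $x^2y^0$ would be $-1$, whereas the partition $(2)$ contributes $+1$, and no other term reaches $x^2$. So the fallback of transforming between the two forms is doomed. The correct conclusion of your argument is the identity with $(x;xy)_n$, obtained by switching to the same-index pairing.
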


\begin{proof}
Since we consider the weight of even-indexed parts and odd-indexed parts separately, as an SIP of modulus $2$, the generating function for $\mathcal{G}$ is
\begin{align*}
\sum_{\lambda\in\mathcal{G}}x^{|\lambda_{o}|}y^{|\lambda_{e}|}=&\sum_{n=1}^{\infty}\sum_{h=0}^{\infty}\frac{B_G(2n-1,3n+h-2)}{(x^2;x^2y^2)_n(x^2y^2;x^2y^2)_{n-1}}+\sum_{n=0}^{\infty}\sum_{h=0}^{\infty}\frac{B_G(2n,3n+h)}{(x^2;x^2y^2)_n(x^2y^2;x^2y^2)_{n}}\\
=&\sum_{n=1}^{\infty}\frac{x^{\frac{3n^2-n}{2}}y^{\frac{3n^2-3n}{2}}}{(x^2;x^2y^2)_n(x^2y^2;x^2y^2)_{n-1}}\sum_{h=0}^{\infty}x^{\frac{h^2+h}{2}}y^{\frac{h^2-h}{2}}{n\brack h}_{xy}\\
&+\sum_{n=0}^{\infty}\frac{x^{\frac{3n^2+3n}{2}}y^{\frac{3n^2+n}{2}}}{(x^2;x^2y^2)_n(x^2y^2;x^2y^2)_{n}}\sum_{h=0}^{\infty}x^{\frac{h^2+h}{2}}y^{\frac{h^2-h}{2}}{n\brack h}_{xy}\\
&\text{(by~\eqref{eq22} with $z\to x$ and $q\to xy$)}\\
=&\sum_{n=1}^{\infty}\frac{(-x;xy)_{n}x^{\frac{3n^2-n}{2}}y^{\frac{3n^2-3n}{2}}}{(x^2;x^2y^2)_n(x^2y^2;x^2y^2)_{n-1}}+\sum_{n=0}^{\infty}\frac{(-x;xy)_{n}x^{\frac{3n^2+3n}{2}}y^{\frac{3n^2+n}{2}}}{(x^2;x^2y^2)_n(x^2y^2;x^2y^2)_{n}}\\
=&1+\sum_{n=1}^{\infty}\frac{(-x;xy)_{n}x^{\frac{3n^2-n}{2}}y^{\frac{3n^2-3n}{2}}}{(x^2;x^2y^2)_n(x^2y^2;x^2y^2)_{n-1}}\left(1+\frac{x^{2n}y^{2n}}{1-x^{2n}y^{2n}}\right)\\
=&\sum_{n=0}^{\infty}\frac{(-x;xy)_{n}x^{\frac{3n^2-n}{2}}y^{\frac{3n^2-3n}{2}}}{(x^2;x^2y^2)_n(x^2y^2;x^2y^2)_{n}}\\
=&\sum_{n=0}^{\infty}\frac{x^{\frac{3n^2-n}{2}}y^{\frac{3n^2-3n}{2}}}{(x;xy)_n(x^2y^2;x^2y^2)_{n}}.
\end{align*}
So we finish the proof.
\end{proof}
Given a partition $\lambda$, let $a(\lambda):=|\lambda_{o}|-|\lambda_{e}|$ be its alternating sum. Let $x\to zq$ and $y\to q/z$ in~\eqref{eq:GFGxy}, we have the following.
\begin{theorem}
\begin{equation}\label{eq:GFGzq}
\sum_{\lambda\in\mathcal{G}}z^{a(\lambda)}q^{|\lambda|}=\sum_{n=0}^{\infty}\frac{z^{n}q^{3n^2-2n}}{(zq;q^2)_n(q^4;q^4)_{n}}.   
\end{equation}
\end{theorem}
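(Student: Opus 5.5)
The plan is to obtain \eqref{eq:GFGzq} as a direct specialization of Theorem~\ref{thm:GFGxy}, substituting $x\to zq$ and $y\to q/z$ on both sides. On the left, the weight of $\lambda\in\mathcal{G}$ becomes
\[
x^{|\lambda_o|}y^{|\lambda_e|}=(zq)^{|\lambda_o|}(q/z)^{|\lambda_e|}=z^{|\lambda_o|-|\lambda_e|}q^{|\lambda_o|+|\lambda_e|}=z^{a(\lambda)}q^{|\lambda|},
\]
because every part of $\lambda$ is either odd-indexed or even-indexed. This is exactly the summand on the left of \eqref{eq:GFGzq}. I will also note why the specialization is legitimate: for $\lambda\in\mathcal{G}$ strict, $|\lambda_o|\ge|\lambda_e|$, so $a(\lambda)\ge 0$. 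Hence no negative powers of $z$ appear, and both sides are formal power series in $q$ whose coefficients are polynomials in $z$.

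On the right, I will use the form of the series obtained in the last line of the proof of Theorem~\ref{thm:GFGxy}, namely
\[
\sum_{n\ge0}\frac{x^{\frac{3n^2-n}{2}}y^{\frac{3n^2-3n}{2}}}{(x;xy)_n(x^2y^2;x^2y^2)_n}.
\]
This form comes from the factorization $(x^2;x^2y^2)_n=(x;xy)_n(-x;xy)_n$, which cancels the numerator $(-x;xy)_n$. I read the $(-x;xy)_n$ printed in the denominator of \eqref{eq:GFGxy} as a misprint for $(x;xy)_n$, since that is what the computation produces.

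With $xy=q^2$, the denominator factors become $(x;xy)_n=(zq;q^2)_n$ and $(x^2y^2;x^2y^2)_n=(q^4;q^4)_n$. For the monomial in the numerator:
\begin{itemize}
\item the exponent of $z$ is $\frac{3n^2-n}{2}-\frac{3n^2-3n}{2}=n$;
\item the exponent of $q$ is $\frac{3n^2-n}{2}+\frac{3n^2-3n}{2}=3n^2-2n$.
\end{itemize}
This yields precisely the right-hand side of \eqref{eq:GFGzq}.

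There is no real obstacle here. The only point needing care is the sign discrepancy in the displayed statement of Theorem~\ref{thm:GFGxy}, which I will resolve by working from the final line of its proof. As a sanity check I will compare low-order terms: $n=1$ gives $zq/((1-zq)(1-q^4))$, which starts with the single-part partition $(1)$, and that partition does lie in $\mathcal{G}$ with $a=1$.
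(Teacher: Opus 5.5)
Your proposal is correct and matches the paper's approach: the paper obtains this identity simply by substituting $x\to zq$, $y\to q/z$ in Theorem~\ref{thm:GFGxy}. You are also right that the denominator $(-x;xy)_n$ in the displayed statement of Theorem~\ref{thm:GFGxy} is a misprint for $(x;xy)_n$, as the last line of its proof shows via $(x^2;x^2y^2)_n=(x;xy)_n(-x;xy)_n$, and only the corrected form specializes to $(zq;q^2)_n$.
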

Note that the sum side of~\eqref{eq:Slater15} is the special case of~\eqref{eq:GFGzq} with $z\to-1$. To find a product side, it is routine to change the order of summation. If we set $x\to zq$ and $y\to q/z$ in the proof of Theorem~\ref{thm:GFGxy},
\begin{align*}
\sum_{\lambda\in\mathcal{G}}z^{a(\lambda)}q^{|\lambda|}=&\sum_{n=1}^{\infty}\frac{z^{n}q^{3n^2-2n}}{(z^2q^2;q^4)_n(q^4;q^4)_{n-1}}\sum_{h=0}^{\infty}z^{h}q^{h^2}{n \brack h}_{q^2}+\sum_{n=0}^{\infty}\frac{z^{n}q^{3n^2+2n}}{(z^2q^2;q^4)_n(q^4;q^4)_n}\sum_{h=0}^{\infty}z^{h}q^{h^2}{n \brack h}_{q^2}\\
=&\sum_{h=0}^{\infty}z^{h}q^{h^2}\left(\sum_{n=1}^{\infty}\frac{z^{n}q^{3n^2-2n}}{(z^2q^2;q^4)_n(q^4;q^4)_{n-1}}{n \brack h}_{q^2}+\sum_{n=0}^{\infty}\frac{z^{n}q^{3n^2+2n}}{(z^2q^2;q^4)_n(q^4;q^4)_n}{n \brack h}_{q^2}\right)\\
=&\sum_{h=0}^{\infty}z^{h}q^{h^2}\left(1+\sum_{n=1}^{\infty}\frac{z^{n}q^{3n^2-2n}}{(z^2q^2;q^4)_n(q^4;q^4)_{n-1}}{n \brack h}_{q^2}+\sum_{n=1}^{\infty}\frac{z^{n}q^{3n^2+2n}}{(z^2q^2;q^4)_n(q^4;q^4)_n}{n \brack h}_{q^2}\right)\\
=&\sum_{h=0}^{\infty}z^{h}q^{h^2}\left(1+\sum_{n=1}^{\infty}\frac{z^{n}q^{3n^2-2n}}{(z^2q^2;q^4)_n(q^4;q^4)_{n-1}}\left(1+\frac{q^{4n}}{1-q^{4n}}\right){n \brack h}_{q^2}\right)\\
=&\sum_{h=0}^{\infty}z^{h}q^{h^2}\left(1+\sum_{n=1}^{\infty}\frac{z^{n}q^{3n^2-2n}}{(z^2q^2;q^4)_n(q^4;q^4)_{n}}{n \brack h}_{q^2}\right)\\
=&\sum_{h=0}^{\infty}z^{h}q^{h^2}\sum_{n=0}^{\infty}\frac{z^{n}q^{3n^2-2n}}{(z^2q^2;q^4)_n(q^4;q^4)_{n}}{n \brack h}_{q^2}\\
=&\sum_{h=0}^{\infty}\frac{z^{h}q^{h^2}}{(q^2;q^2)_h}\sum_{n=h}^{\infty}\frac{z^{n}q^{3n^2-2n}}{(z^2q^2;q^4)_n(-q^2;q^2)_{n}(q^2;q^2)_{n-h}}\\
=&\sum_{h=0}^{\infty}\frac{z^{2h}q^{4h^2-2h}}{(q^2;q^2)_h(z^2q^2;q^4)_h(-q^2;q^2)_h}\sum_{n=0}^{\infty}\frac{z^{n}q^{3n^2+6nh-2n}}{(z^2q^{4h+2};q^4)_n(-q^{2h+2};q^2)_{n}(q^2;q^2)_{n}}\\
=&\sum_{h=0}^{\infty}\frac{z^{2h}q^{4h^2-2h}}{(q^4;q^4)_h(z^2q^2;q^4)_h}\sum_{n=0}^{\infty}\frac{z^{n}q^{3n^2+6nh-2n}}{(zq^{2h+1};q^2)_n(-zq^{2h+1};q^2)_{n}(-q^{2h+2};q^2)_{n}(q^2;q^2)_{n}}.
\end{align*}
However, it seems that the inner sum is still hard to handle, and even if we set $z\to-1$, as the special case of \eqref{eq:Slater19}, we couldn't reach the product side. So, for this identity, we only provided a partition interpretation for the series.

\subsection{SIP interpretation for Equation $(19)$}

Let $\mathcal{G}'$ be the set of partitions such that
\begin{enumerate}
    \item $\lambda_{2i-1}-\lambda_{2i}\geq3$ and $\lambda_{2i}-\lambda_{2i+1}$ is even for all $i$;
    \item the smallest part must be at least $3$ if it is odd-indexed, and must be even if it is even-indexed.
\end{enumerate}
Then $\mathcal{G}'$ is an SIP class with modulus $2$, and the basis $\mathcal{B}_{\mathcal{G}'}$ consists of partitions such that
\begin{enumerate}
    \item $3\leq\lambda_{2i-1}-\lambda_{2i}\leq4$ and $\lambda_{2i}-\lambda_{2i+1}=0$ for all $i$;
    \item the smallest part can be either $3$ or $4$ if it is odd-indexed, and can be only $2$ if it is even-indexed.
\end{enumerate}
Let $B_{G'}(n,h):=B_{G'}(n,h;x,y)$ be the generating function of partitions in $\mathcal{B}_{\mathcal{G}'}$ with length $n$ and largest part $h$, where $x$ and $y$ keep track of $|\lambda_{o}|$ and $|\lambda_{e}|$, respectively.

\begin{theorem}
The initial values for $B_{G'}(n,h)$ are
\begin{equation}
B_{G'}(0,h)=\begin{cases}
    1 & \text{if $h=0$}, \\
    0 & \text{otherwise},
\end{cases}    
\end{equation}
and
\begin{equation}
B_{G'}(1,h)=\begin{cases}
    x^3 & \text{if $h=3$}, \\
    x^4 & \text{if $h=4$},\\
    0 & \text{otherwise}.
\end{cases}
\end{equation}
\end{theorem}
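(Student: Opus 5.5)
This is a direct verification from the definition of the basis $\mathcal{B}_{\mathcal{G}'}$. We treat the two lengths separately, reading off the admissible partitions and their weights $x^{|\lambda_o|}y^{|\lambda_e|}$.

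For $n=0$, the only partition of length $0$ is the empty partition. We adopt the convention that its largest part is $0$, as was done for $B_G(0,h)$. Its weight is $x^0y^0=1$, so $B_{G'}(0,0)=1$ and $B_{G'}(0,h)=0$ for $h\neq 0$.

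For $n=1$, a partition $\lambda=(\lambda_1)$ of length one has no gap conditions to check from condition (1) of the basis. Its single part is the smallest part, and its index $1$ is odd. Condition (2) of the basis therefore forces $\lambda_1\in\{3,4\}$. Since $\lambda_1$ is odd-indexed, it contributes to $|\lambda_o|$, while $|\lambda_e|=0$. The weight is thus $x^{\lambda_1}$, giving $x^3$ for $h=3$, $x^4$ for $h=4$, and $0$ for every other $h$.

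The only point needing care, which we would address briefly, is that the basis described is genuinely the basis of the SIP class $\mathcal{G}'$ in the length-one case. A one-part partition in $\mathcal{G}'$ is a single part $\geq 3$, because it is odd-indexed and smallest. It decomposes uniquely as $b_1+\pi_1$ with $b_1\in\{3,4\}$ and $\pi_1\equiv 0 \pmod 2$, according to the parity of the part. This confirms that the length-one basis elements are exactly $(3)$ and $(4)$, and the stated initial values follow.
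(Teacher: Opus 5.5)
Your verification is correct and matches what the paper intends. The paper states these values without proof, since they come straight from the definition of $\mathcal{B}_{\mathcal{G}'}$. Your length-one analysis (the lone part is odd-indexed and smallest, hence equal to $3$ or $4$ with weight $x^{\lambda_1}$) also agrees with the paper's closed form at $n=1$.

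One caution on your remark that the $n=0$ convention is ``as was done for $B_G$''. In $\mathcal{B}_{\mathcal{G}'}$ one has $\lambda_3=\lambda_2$, so recovering the length-two values $B_{G'}(2,5)=x^5y^2$ and $B_{G'}(2,6)=x^6y^2$ from the recurrence requires the seed $B_{G'}(0,2)=1$, which is also what the paper's even-length closed form gives at $n=h=0$. The seed $B_{G'}(0,0)=1$ is true only as the literal convention $l(\emptyset)=0$, and it does not feed the recurrence correctly: it would produce spurious values $B_{G'}(2,3)=x^3$ and $B_{G'}(2,4)=x^4$.
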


\begin{theorem}
For $n\geq1$,
\begin{equation}\label{eq:BG'Rec}
B_{G'}(n,h)=x^{h}y^{h-3}B_G(n-2,h-3)+x^{h}y^{h-4}B_G(n-2,h-4).
\end{equation}
\end{theorem}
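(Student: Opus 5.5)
The plan is to argue exactly as in the proof of the recurrence \eqref{eq:BGRec}, peeling off the first two parts of a basis partition, and then to identify the generating function of the remaining tail with the quantity written on the right-hand side.

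\textbf{Step 1 (peeling off the first two parts).} Let $\lambda=(\lambda_1,\dots,\lambda_n)\in\mathcal{B}_{\mathcal{G}'}$ have length $n$ and $\lambda_1=h$. By condition (1) of the basis, the gap $\lambda_1-\lambda_2$ is $3$ or $4$, so $\lambda_2\in\{h-3,h-4\}$. The next gap is $\lambda_2-\lambda_3=0$, so $\lambda_3=\lambda_2$.
\begin{itemize}
\item The part $\lambda_1$ is odd-indexed and contributes $x^{h}$.
\item The part $\lambda_2$ is even-indexed and contributes $y^{h-3}$ or $y^{h-4}$ respectively.
\end{itemize}
Deleting $\lambda_1,\lambda_2$ leaves $(\lambda_3,\dots,\lambda_n)$. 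Its length is $n-2$, its largest part is $h-3$ or $h-4$, and it keeps the parity of indexing, since two indices were removed. Conversely, the two cases are disjoint and each such tail extends uniquely. This yields
\[
B_{G'}(n,h)=x^{h}y^{h-3}T(n-2,h-3)+x^{h}y^{h-4}T(n-2,h-4),
\]
where $T(m,k)$ is the generating function of the admissible tails of length $m$ with largest part $k$.

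\textbf{Step 2 (boundary cases $n=1,2$).} Here I would check the cases separately using the convention $B_G(0,h)=\delta_{h,0}$ and $B_G(m,h)=0$ for $m<0$.
\begin{itemize}
\item For $n=2$ the tail is empty, forcing $\lambda_2$ to be the smallest, even-indexed part. The basis requires this part to equal $2$, so only the choices $h=5,6$ survive. The right-hand side reproduces exactly these through the factor $B_G(0,\cdot)$.
\item For $n=1$ both sides must be matched against the stated initial values $x^3,x^4$. This needs the convention that the recurrence is only asserted where the tail term is meaningful, i.e.\ one falls back on the initial values.
\end{itemize}

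\textbf{Step 3 (identifying $T$ with $B_G$; the main obstacle).} The tail $(\lambda_3,\dots)$ satisfies the $\mathcal{B}_{\mathcal{G}'}$ gap rules, so a priori $T=B_{G'}$. To obtain $B_G$ as the statement is written, I would compare the two bases directly. They differ as follows:
\begin{itemize}
\item odd-to-even gaps are $3$–$4$ in $\mathcal{B}_{\mathcal{G}'}$ versus $1$–$2$ in $\mathcal{B}_{\mathcal{G}}$;
\item even-to-odd gaps are $0$ versus $2$;
\item the smallest odd-indexed part is $3$–$4$ versus $1$.
\end{itemize}
The natural candidate bijection subtracts $2$ from every odd-indexed part. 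It fixes all the gap conditions, and it fixes the terminal even-indexed part $2$. It does, however, shift the largest part and the $x$-exponent, and it sends a terminal odd-indexed part $3$ or $4$ to $1$ or $2$, which does not match $\mathcal{B}_{\mathcal{G}}$ exactly.

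I expect this identification to be the crux. The first thing to try is to show that these discrepancies are absorbed, namely that the shift in largest part and weight is compensated so that $T(m,k)$ coincides with the $B_G(m,k)$ appearing in the statement. If it is not absorbed, the relation must be read with $B_G$ denoting the tail generating function of $\mathcal{G}'$. In that reading Step 1 already completes the proof.
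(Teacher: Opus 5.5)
Your Step~1 is the paper's proof essentially verbatim. The paper notes $\lambda_2=\lambda_3\in\{h-3,h-4\}$, records the weight $x^hy^{h-3}$ or $x^hy^{h-4}$ of the first two parts, and deletes them to land in $\mathcal{B}_{\mathcal{G}'}$ with length $n-2$. So the tail is counted by $B_{G'}$, and you should settle Step~3 rather than leave it conditional. The $B_G$ in the statement is a misprint for $B_{G'}$: the very next proof applies the recurrence as $B_{G'}(2n-3,\cdot)$ and $B_{G'}(2n-2,\cdot)$. The literal version is also false. At $n=3$, $h=5$ the left side is $0$, since a length-$3$ basis partition has $\lambda_1\ge\lambda_3+3\ge 6$. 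The right side, however, is $x^5y^2B_G(1,2)+x^5yB_G(1,1)=x^7y^2+x^6y$. Your map subtracting $2$ from the odd-indexed parts is in fact a genuine bijection $\mathcal{B}_{\mathcal{G}'}\to\mathcal{B}_{\mathcal{G}}$, because an odd-indexed smallest part $2$ is allowed in $\mathcal{B}_{\mathcal{G}}$ (cf.\ $B_G(1,2)=x^2$). But it gives $B_{G'}(m,k)=x^{2\lceil m/2\rceil}B_G(m,k-2)$, so the shift is real and confirms the misprint rather than being absorbed.

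The step that actually fails is your $n=2$ check. You adopt the convention $B(0,k)=\delta_{k,0}$, which is also the one the paper states for $B_{G'}$. Under it, the right-hand side at $n=2$ is $x^3\delta_{h,3}+x^4\delta_{h,4}$, supported at $h=3,4$. The true values are $B_{G'}(2,5)=x^5y^2$ and $B_{G'}(2,6)=x^6y^2$. So the claim that the right side ``reproduces exactly these'' is wrong, whichever of $B_G$, $B_{G'}$ you read. The recurrence holds at $n=2$ only with $B_{G'}(0,k)=\delta_{k,2}$, which is $B_G(0,k-2)$ under the bijection above. The reason is that a final even-indexed part must be $2$, so the empty tail acts as a phantom part $2$ reached by a gap of $0$. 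This is what \eqref{eq:BG'Even} gives at $n=0$, and it is how the empty partition actually enters the proof of Theorem~\ref{thm:GFG'xy}. Similarly, at $n=1$ you need not fall back on initial values. With $B_{G'}(-1,k)=\delta_{k,0}$, which is \eqref{eq:BG'Odd} at $n=0$, the recurrence returns exactly $x^3$ at $h=3$ and $x^4$ at $h=4$. So either adopt these two conventions, under which the recurrence holds for all $n\ge1$, or state it for $n\ge3$ with $n=1,2$ as initial data.
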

\begin{proof}
Giving a partition $\lambda=(\lambda_1,\lambda_2,\ldots,\lambda_{\#(\lambda)})$ in $\mathcal{B}_{\mathcal{G}'}$ with length $n$ and $\lambda_1=h$, then either $\lambda_2=\lambda_3=h-3$ or $\lambda_2=\lambda_3=h-4$. So, the weight of $\lambda_1$ and $\lambda_2$ is either $x^hy^{h-3}$ or $x^hy^{h-3}$. By deleting $\lambda_1$ and $\lambda_2$, we get a partition in the basis with length $n-2$, so we have the recurrence.   
\end{proof}

\begin{theorem}
\begin{equation}\label{eq:BG'Odd}
B_{G'}(2n-1,3n+h)=x^{\frac{3n^2+3n}{2}+\frac{h^2+h}{2}}y^{\frac{3n^2-3n}{2}+\frac{h^2-h}{2}}{n\brack h}_{xy},
\end{equation}
\begin{equation}\label{eq:BG'Even}
B_{G'}(2n,3n+h+2)=x^{\frac{3n^2+7n}{2}+\frac{h^2+h}{2}}y^{\frac{3n^2+n}{2}+\frac{h^2-h}{2}}{n\brack h}_{xy}.
\end{equation}
\end{theorem}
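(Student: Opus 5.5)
The plan is the same as for \eqref{eq:BGOdd} and \eqref{eq:BGEven}: check the smallest cases directly, then show that the right-hand sides of \eqref{eq:BG'Odd} and \eqref{eq:BG'Even} satisfy the recurrence \eqref{eq:BG'Rec}. The recurrence will be read with $B_{G'}$ rather than $B_G$ on the right, which is what its combinatorial proof gives. Since the recurrence deletes $\lambda_1$ and $\lambda_2$, it lowers the length by $2$. So I will run two separate inductions on $n$: one for the odd lengths $2n-1$ and one for the even lengths $2n$, each with its own base case.

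\textbf{Base cases.} For length $1$, \eqref{eq:BG'Odd} with $n=1$ gives $x^{3+\frac{h^2+h}{2}}y^{\frac{h^2-h}{2}}{1\brack h}_{xy}$. This equals $x^3$ for $h=0$, equals $x^4$ for $h=1$, and vanishes otherwise, which matches the initial values of $B_{G'}(1,h)$. For length $2$ I would avoid $B_{G'}(0,\cdot)$, since the offset $+2$ in \eqref{eq:BG'Even} does not match $B_{G'}(0,0)=1$. Instead I will read the length-$2$ basis partitions off the definition of $\mathcal{B}_{\mathcal{G}'}$: the smallest part is $\lambda_2=2$ and $\lambda_1\in\{5,6\}$, so these partitions contribute $x^5y^2$ and $x^6y^2$. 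Formula \eqref{eq:BG'Even} with $n=1$ and $h=0,1$ gives exactly these two monomials, and it vanishes for $h\ge 2$.

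\textbf{Inductive step.} I expand ${n\brack h}_{xy}=(xy)^h{n-1\brack h}_{xy}+{n-1\brack h-1}_{xy}$. The first summand should match $x^{h'}y^{h'-3}B_{G'}(\cdot,h'-3)$ and the second should match $x^{h'}y^{h'-4}B_{G'}(\cdot,h'-4)$.

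For the odd case, $h'=3n+h$. Then $h'-3=3(n-1)+h$, so the index $h$ is unchanged, and $h'-4=3(n-1)+(h-1)$, so the index drops to $h-1$. The exponent bookkeeping is as follows. Passing from $n-1$ to $n$ raises the $x$-exponent by $3n$ and the $y$-exponent by $3n-3$. Replacing $h$ by $h-1$ changes $\frac{h^2+h}{2}$ by $h$ and $\frac{h^2-h}{2}$ by $h-1$. These give prefactors $x^{3n+h}y^{3n+h-3}$ for the first summand (the extra $(xy)^h$ comes from the binomial recurrence) and $x^{3n+h}y^{3n+h-4}$ for the second, as required.

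The even case is identical with $h'=3n+h+2$. Passing from $n-1$ to $n$ now raises the $x$-exponent by $3n+2$ and the $y$-exponent by $3n-1$. So the first summand gets the prefactor $x^{3n+h+2}y^{3n+h-1}$, which is $x^{h'}y^{h'-3}$, and the second gets $x^{3n+h+2}y^{3n+h-2}$, which is $x^{h'}y^{h'-4}$.

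\textbf{Expected obstacle.} The only real obstacle is this index and exponent bookkeeping: making sure the shifts $h'-3$ and $h'-4$ land on the indices $h$ and $h-1$ in the same parity class. A second point is justifying that the recurrence applies only for lengths at least $3$, which is why both base cases must be verified directly rather than derived from length $0$ or $-1$.
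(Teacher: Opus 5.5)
Your proposal is correct and follows the paper's proof: check the initial values, then show that both closed forms satisfy the two-step recurrence for $B_{G'}$ via ${n\brack h}_{xy}=(xy)^h{n-1\brack h}_{xy}+{n-1\brack h-1}_{xy}$, using the same exponent shifts the paper computes ($3n$ and $3n-3$ in the odd case, $3n+2$ and $3n-1$ in the even case). Your base-case handling is more careful than the paper's: you read the recurrence with $B_{G'}$, use it only for lengths $\ge 3$, and check length $2$ directly, whereas the paper's stated value $B_{G'}(0,0)=1$ combined with its recurrence ``for $n\ge1$'' would not reproduce $B_{G'}(2,\cdot)$.
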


\begin{proof}
We first show that these expressions satisfy the recurrence in \eqref{eq:BG'Rec}. For \eqref{eq:B4OddRec},
\begin{align*}
&B_{G'}(2n-1,3n+h)\\
=&x^{\frac{3n^2+3n}{2}+\frac{h^2+h}{2}}y^{\frac{3n^2-3n}{2}+\frac{h^2-h}{2}}{n\brack h}_{xy}\\
=&x^{\frac{3n^2+3n}{2}+\frac{h^2+h}{2}}y^{\frac{3n^2-3n}{2}+\frac{h^2-h}{2}}\left(x^hy^h{n-1\brack h}_{xy}+{n-1\brack h-1}_{xy}\right)\\
=&x^{\frac{3n^2+3n}{2}+\frac{h^2+3h}{2}}y^{\frac{3n^2-3n}{2}+\frac{h^2+h}{2}}{n-1\brack h}_{xy}+x^{\frac{3n^2+3n}{2}+\frac{h^2+h}{2}}y^{\frac{3n^2-3n}{2}+\frac{h^2-h}{2}}{n-1\brack h-1}_{xy}\\
=&x^{3n+h}y^{3n+h-3}x^{\frac{3n^2-3n}{2}+\frac{h^2+h}{2}}y^{\frac{3n^2-9n+6}{2}+\frac{h^2-h}{2}}{n-1\brack h}_{xy}\\
&+x^{3n+h}y^{3n+h-4}x^{\frac{3n^2-3n}{2}+\frac{h^2-h}{2}}y^{\frac{3n^2-9n+6}{2}+\frac{h^2-3h+2}{2}}{n-1\brack h-1}_{xy}\\
=&x^{3n+h}y^{3n+h-3}B_{G'}(2n-3,3n+h-3)+x^{3n+h}y^{3n+h-4}B_{G'}(2n-3,3n+h-4).
\end{align*}
For \eqref{eq:BG'Even},
\begin{align*}
&B_{G'}(2n,3n+h+2)\\
=&x^{\frac{3n^2+7n}{2}+\frac{h^2+h}{2}}y^{\frac{3n^2+n}{2}+\frac{h^2-h}{2}}{n\brack h}_{xy}\\
=&x^{\frac{3n^2+7n}{2}+\frac{h^2+h}{2}}y^{\frac{3n^2+n}{2}+\frac{h^2-h}{2}}\left(x^hy^h{n-1\brack h}_{xy}+{n-1\brack h-1}_{xy}\right)\\
=&x^{\frac{3n^2+7n}{2}+\frac{h^2+3h}{2}}y^{\frac{3n^2+n}{2}+\frac{h^2+h}{2}}{n-1\brack h}_{xy}+x^{\frac{3n^2+7n}{2}+\frac{h^2+h}{2}}y^{\frac{3n^2+n}{2}+\frac{h^2-h}{2}}{n-1\brack h-1}_{xy}\\
=&x^{3n+h+2}y^{3n+h-1}x^{\frac{3n^2+n-4}{2}+\frac{h^2+h}{2}}y^{\frac{3n^2-5n+2}{2}+\frac{h^2-h}{2}}{n-1\brack h}_{xy}\\
&+x^{3n+h+2}y^{3n+h-2}x^{\frac{3n^2+n-4}{2}+\frac{h^2-h}{2}}y^{\frac{3n^2-5n+2}{2}+\frac{h^2-3h+2}{2}}{n-1\brack h-1}_{xy}\\
=&x^{3n+h+2}y^{3n+h-1}B_{G'}(2n-2,3n+h-1)+x^{3n+h+2}y^{3n+h-2}B_{G'}(2n-2,3n+h-2).
\end{align*}
It is straightforward to check the initial values, so we finish the proof.
\end{proof}

\begin{theorem}\label{thm:GFG'xy}
\begin{equation}\label{eq:GFG'xy}
\sum_{\lambda\in\mathcal{G}'}x^{|\lambda_{o}|}y^{|\lambda_{e}|}=\sum_{n=0}^{\infty}\frac{x^{\frac{3n^2+3n}{2}}y^{\frac{3n^2-3n}{2}}}{(-x;xy)_n(x^2y^2;x^2y^2)_{n}}.
\end{equation}
\end{theorem}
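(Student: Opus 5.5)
The plan is to follow the proof of Theorem~\ref{thm:GFGxy} step by step, using the closed forms \eqref{eq:BG'Odd} and \eqref{eq:BG'Even} for $B_{G'}$ in place of those for $B_G$. Since $\mathcal{G}'$ is an SIP class of modulus $2$, every $\lambda\in\mathcal{G}'$ of length $m$ is a basis partition in $\mathcal{B}_{\mathcal{G}'}$ plus a partition into $m$ nonnegative multiples of $2$. I will view the added part as a collection of width-$2$ columns. A column of width $2$ covering the first $j$ rows contributes $x^{2\lceil j/2\rceil}y^{2\lfloor j/2\rfloor}$, because it meets $\lceil j/2\rceil$ odd-indexed rows and $\lfloor j/2\rfloor$ even-indexed rows. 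Multiplying over $1\le j\le m$ gives the tail factor
\[
\frac{1}{(x^2;x^2y^2)_n(x^2y^2;x^2y^2)_{n-1}}\ \text{for } m=2n-1,
\qquad
\frac{1}{(x^2;x^2y^2)_n(x^2y^2;x^2y^2)_{n}}\ \text{for } m=2n .
\]
This factor replaces $1/(q^k;q^k)_m$ in Andrews' theorem, because the weight is now bivariate.

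Next I split the generating function by parity of length. For odd lengths I sum $B_{G'}(2n-1,3n+h)$ over $h\ge0$, and for even lengths I sum $B_{G'}(2n,3n+h+2)$ over $h\ge0$. The $n=0$ even term is the empty partition and contributes $1$. In both cases the $h$-dependent part is $x^{(h^2+h)/2}y^{(h^2-h)/2}{n\brack h}_{xy}$. Applying \eqref{eq22} with $z\to x$ and $q\to xy$ (note $x^h(xy)^{\binom h2}=x^{(h^2+h)/2}y^{(h^2-h)/2}$), this sums to $(-x;xy)_n$. This leaves
\[
\sum_{n\ge1}\frac{(-x;xy)_n x^{\frac{3n^2+3n}{2}}y^{\frac{3n^2-3n}{2}}}{(x^2;x^2y^2)_n(x^2y^2;x^2y^2)_{n-1}}
+\sum_{n\ge0}\frac{(-x;xy)_n x^{\frac{3n^2+7n}{2}}y^{\frac{3n^2+n}{2}}}{(x^2;x^2y^2)_n(x^2y^2;x^2y^2)_{n}} .
\]

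For fixed $n\ge1$, the even-length term divided by the odd-length term is exactly $x^{2n}y^{2n}/(1-x^{2n}y^{2n})$. Hence the two terms add up to the odd term multiplied by $1/(1-x^{2n}y^{2n})$, which raises $(x^2y^2;x^2y^2)_{n-1}$ to $(x^2y^2;x^2y^2)_n$. Absorbing the $1$ as the $n=0$ term gives
\[
\sum_{n\ge0}\frac{(-x;xy)_n x^{\frac{3n^2+3n}{2}}y^{\frac{3n^2-3n}{2}}}{(x^2;x^2y^2)_n(x^2y^2;x^2y^2)_n}.
\]
Finally I use $(x^2;x^2y^2)_n=(x;xy)_n(-x;xy)_n$ to cancel $(-x;xy)_n$.

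I expect the main obstacle to be bookkeeping rather than ideas. The ratio check between the even- and odd-length exponents must come out exactly right, and the final cancellation has to be handled carefully. That cancellation produces $(x;xy)_n$ in the denominator, as in the last line of the proof of Theorem~\ref{thm:GFGxy}, not $(-x;xy)_n$. I would double-check this against small cases, such as the single-part partitions $(3)$ and $(4)$. If the cases confirm it, I would read the displayed $(-x;xy)_n$ in \eqref{eq:GFG'xy}, like the one in \eqref{eq:GFGxy}, as $(x;xy)_n$.
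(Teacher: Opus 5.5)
Your proposal is correct and matches the paper's proof step for step. Both use the closed forms \eqref{eq:BG'Odd}--\eqref{eq:BG'Even}, the bivariate width-$2$ column factor, summation over $h$ via \eqref{eq22} (the paper cites \eqref{eq21} there, which is a slip), merging of lengths $2n-1$ and $2n$ through $1/(1-x^{2n}y^{2n})$, and cancellation using $(x^2;x^2y^2)_n=(x;xy)_n(-x;xy)_n$.

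You are also right about the sign. The paper's own proof ends with $(x;xy)_n$ in the denominator, and the single-part partitions $(3)$ and $(4)$ already rule out $(-x;xy)_n$, so the displayed statement contains a typo.
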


\begin{proof}
Since we consider the weight of even-indexed parts and odd-indexed parts separately, as an SIP of modulus $2$, the generating function for $\mathcal{G}'$ is
\begin{align*}
\sum_{\lambda\in\mathcal{G}'}x^{|\lambda_{o}|}y^{|\lambda_{e}|}=&\sum_{n=1}^{\infty}\sum_{h=0}^{\infty}\frac{B_{G'}(2n-1,3n+h)}{(x^2;x^2y^2)_n(x^2y^2;x^2y^2)_{n-1}}+\sum_{n=0}^{\infty}\sum_{h=0}^{\infty}\frac{B_{G'}(2n,3n+h+2)}{(x^2;x^2y^2)_n(x^2y^2;x^2y^2)_{n}}\\
=&\sum_{n=1}^{\infty}\frac{x^{\frac{3n^2+3n}{2}}y^{\frac{3n^2-3n}{2}}}{(x^2;x^2y^2)_n(x^2y^2;x^2y^2)_{n-1}}\sum_{h=0}^{\infty}x^{\frac{h^2+h}{2}}y^{\frac{h^2-h}{2}}{n\brack h}_{xy}\\
&+\sum_{n=0}^{\infty}\frac{x^{\frac{3n^2+7n}{2}}y^{\frac{3n^2+n}{2}}}{(x^2;x^2y^2)_n(x^2y^2;x^2y^2)_{n}}\sum_{h=0}^{\infty}x^{\frac{h^2+h}{2}}y^{\frac{h^2-h}{2}}{n\brack h}_{xy}\\
&\text{(by~\eqref{eq21} with $z\to x$ and $q\to xy$)}\\
=&\sum_{n=1}^{\infty}\frac{(-x;xy)_{n}x^{\frac{3n^2+3n}{2}}y^{\frac{3n^2-3n}{2}}}{(x^2;x^2y^2)_n(x^2y^2;x^2y^2)_{n-1}}+\sum_{n=0}^{\infty}\frac{(-x;xy)_{n}x^{\frac{3n^2+7n}{2}}y^{\frac{3n^2+n}{2}}}{(x^2;x^2y^2)_n(x^2y^2;x^2y^2)_{n}}\\
=&1+\sum_{n=1}^{\infty}\frac{(-x;xy)_{n}x^{\frac{3n^2+3n}{2}}y^{\frac{3n^2-3n}{2}}}{(x^2;x^2y^2)_n(x^2y^2;x^2y^2)_{n-1}}\left(1+\frac{x^{2n}y^{2n}}{1-x^{2n}y^{2n}}\right)\\
=&\sum_{n=0}^{\infty}\frac{(-x;xy)_{n}x^{\frac{3n^2+3n}{2}}y^{\frac{3n^2-3n}{2}}}{(x^2;x^2y^2)_n(x^2y^2;x^2y^2)_{n}}\\
=&\sum_{n=0}^{\infty}\frac{x^{\frac{3n^2+3n}{2}}y^{\frac{3n^2-3n}{2}}}{(x;xy)_n(x^2y^2;x^2y^2)_{n}}.
\end{align*}
So we finish the proof.
\end{proof}

Let $x\to zq$ and $y\to q/z$ in~\eqref{eq:GFGxy}, we have the following.
\begin{theorem}
\begin{equation}\label{eq:GFG'zq}
\sum_{\lambda\in\mathcal{G'}}z^{a(\lambda)}q^{|\lambda|}=\sum_{n=0}^{\infty}\frac{z^{n}q^{3n^2}}{(zq;q^2)_n(q^4;q^4)_{n}}.
\end{equation}
\end{theorem}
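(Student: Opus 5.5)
The plan is to specialize Theorem~\ref{thm:GFG'xy}, in the same way that \eqref{eq:GFGzq} was obtained from Theorem~\ref{thm:GFGxy}. The substitution is $x\to zq$ and $y\to q/z$. Under it each part $\lambda_j$ contributes $(zq)^{\lambda_j}$ if $j$ is odd and $(q/z)^{\lambda_j}$ if $j$ is even. So $x^{|\lambda_o|}y^{|\lambda_e|}$ becomes $z^{|\lambda_o|-|\lambda_e|}q^{|\lambda_o|+|\lambda_e|}=z^{a(\lambda)}q^{|\lambda|}$, and the left side of \eqref{eq:GFG'xy} turns into the left side of \eqref{eq:GFG'zq}.

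On the right side I will use $xy=q^2$ and $x^2y^2=q^4$. With these, $(x^2y^2;x^2y^2)_n=(q^4;q^4)_n$. The Pochhammer factor in the denominator becomes $(zq;q^2)_n$. Here I will use the last line of the proof of Theorem~\ref{thm:GFG'xy}, which has $(x;xy)_n$ after the cancellation $(-x;xy)_n/(x^2;x^2y^2)_n=1/(x;xy)_n$. That last line is the correct form, rather than the displayed $(-x;xy)_n$ in \eqref{eq:GFG'xy}. For the numerator I compute the powers separately:
\[
x^{\frac{3n^2+3n}{2}}y^{\frac{3n^2-3n}{2}}=z^{\frac{3n^2+3n}{2}-\frac{3n^2-3n}{2}}\,q^{3n^2}=z^{3n}q^{3n^2}.
\]
The $q$-exponent $3n^2$ matches \eqref{eq:GFG'zq} exactly, so the $q$-part of the statement follows directly.

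The main obstacle is the power of $z$. The substitution gives $z^{3n}$, while the statement has $z^{n}$. This is unlike the $\mathcal{G}$ case, where $\frac{3n^2-n}{2}-\frac{3n^2-3n}{2}=n$. Combinatorially the $z$-exponent must be $a(\lambda)$ for the basis partitions with $2n-1$ or $2n$ parts and $h=0$. I will check this on the smallest basis element $(3)$: it has $a(\lambda)=3$ and weight $zq^{3}$ under $x\to zq$, so its term must be $z^3q^3$, which agrees with the $z^{3n}$ form.

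So the argument proves \eqref{eq:GFG'zq} exactly as worded for $z^2=1$, since then $z^{3n}=z^{n}$. This includes $z=-1$, the specialization giving the sum side of \eqref{eq:Slater19}, and $z=1$. For general $z$ the proof establishes the displayed identity with $z^{n}$ replaced by $z^{3n}$. I would flag this exponent in the text and not try to force the general-$z$ statement.
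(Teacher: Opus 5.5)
Your proposal is correct and is the paper's own argument: the paper obtains this theorem solely by the substitution $x\to zq$, $y\to q/z$ in Theorem~\ref{thm:GFG'xy} (its text cites \eqref{eq:GFGxy} by mistake), with denominator $(x;xy)_n$ from the last line of that proof rather than the misprinted $(-x;xy)_n$. The $z^{3n}$ versus $z^{n}$ discrepancy you flag is a genuine error in the printed statement, not in your computation. The coefficient of $q^3$ on the left is $z^3$, coming from the single partition $(3)\in\mathcal{G}'$, while the printed right-hand side gives $z$. So the general identity must read with $z^{3n}q^{3n^2}$. The printed form holds for $z=\pm1$, where $z^{3n}=z^n$, but not for general $z$; this covers the case $z=-1$ used for Slater's $(19)$ in Remark~\ref{rmk2}, so that application is unaffected.
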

\begin{remark}\label{rmk2}
Note that the sum side of~\eqref{eq:Slater19} is the special case of~\eqref{eq:GFG'zq} with $z = -1$. Applying \eqref{eq26} with $q\rightarrow q^2$, $a, b\rightarrow\infty$, $d=zq, e=-q^2$ (or $d=-q^2, e=zq$) yields
\begin{align*}
\sum_{n\ge 0}\dfrac{z^n q^{3n^2}}{(zq;q^2)_n(q^4;q^4)_n}=\dfrac{1}{(-q^2;q^2)_\infty}\sum_{n\ge 0}\dfrac{q^{n^2+n}}{(zq;q^2)_n(q^2;q^2)_n}=\dfrac{1}{(zq;q^2)_\infty}\sum_{n\ge 0}\dfrac{(-z)^n q^{n^2}}{(q^4;q^4)_n}.\numberthis\label{eq517}    
\end{align*}
For $z = -1$, the sum on farthest right becomes a product due to Rogers' following identity \cite{Rogers1894}.
\begin{align*}
\sum\limits_{n\ge 0}\dfrac{q^{n^2}}{(q^4;q^4)_n} = \dfrac{1}{(-q^2;q^2)_{\infty}(q,q^4;q^5)_{\infty}}.\numberthis\label{eq518}    
\end{align*}
Hence, from \eqref{eq517} and \eqref{eq518}, we have
\begin{align*}
\sum\limits_{n\ge 0}\dfrac{(-1)^nq^{3n^2}}{(-q;q^2)_n(q^4;q^4)_n} &= \dfrac{1}{(-q,-q^2;q^2)_{\infty}(q,q^4;q^5)_{\infty}}\\
&= \dfrac{1}{(-q;q)_{\infty}(q,q^4;q^5)_{\infty}}\\
&= \dfrac{(q;q)_{\infty}}{(q^2;q^2)_{\infty}(q,q^4;q^5)_{\infty}}\\
&= \dfrac{(q^2,q^3,q^5;q^5)_{\infty}}{(q^2;q^2)_{\infty}},
\end{align*}
which gives us \eqref{eq:Slater19}.\\
\end{remark}

\section{A Comment on the Combinatorics}\label{sec:SlaterList7}
Some of the identities in Slater's list can be treated in a purely combinatorial manner. For instance, the Equation~$(5)$ in Slater's list states that
\begin{equation}\label{eq:SlaterList5}
\sum_{n=0}^{\infty}\frac{(-1)^{n}q^{n(2n+1)}}{(q^2;q^2)_{n}(-q;q^{2})_{n+1}}=(q;q^2)_{\infty}(-q^2;q^2)_{\infty}.    
\end{equation}
Consider the following identity,
\begin{equation}\label{eq:PartitionOdd}
\sum_{n=0}^{\infty}\frac{z^nq^{n(2n+1)}}{(q^2;q^2)_{n}(zq;q^2)_{n+1}}=\frac{1}{(zq;q^2)_{\infty}}.
\end{equation}
Letting $z\to-1$ in~\eqref{eq:PartitionOdd}, the right-hand side is
$$\frac{1}{(-q;q^2)_{\infty}}=\frac{(-q^2;q^2)_{\infty}}{(-q;q)_{\infty}}=(q;q^2)_{\infty}(-q^2;q^2)_{\infty},$$
which would imply~\eqref{eq:SlaterList5} as a corollary. Here we present a bijective proof for~\eqref{eq:PartitionOdd}. Let $\mathcal{O}$ be the set of partitions into odd parts. It is clear that
$$\sum_{\lambda\in\mathcal{O}}z^{\#(\lambda)}q^{|\lambda|}=\frac{1}{(zq;q^2)_{\infty}},$$
so it suffices to show that the left-hand side of~\eqref{eq:PartitionOdd} generates the same objects.
\begin{proof}
The partitions into odd parts can be represented as the following mod $2$ Ferrers diagram.
\begin{center}
\begin{ytableau}
*(gray)1 & *(gray)2 & *(gray)2 & *(gray)2 & 2 & 2 & 2 & 2\\
*(gray)1 & *(gray)2 & *(gray)2 & *(gray)2 & 2 & 2\\
*(gray)1 & *(gray)2 & *(gray)2 & *(gray)2\\
1 & 2 & 2\\
1
\end{ytableau}
\end{center}
In each row, the first box is filled by a $1$, while the rest are filled by $2$'s. We find the largest integer $n$ such that an $n\times(n+1)$ rectangle is contained in the diagram, like the gray area. This rectangle is generated by $z^{n}q^{n(2n+1)}$, where $z$ keeps track of the length. The portion to the right is clearly $1/(q^2;q^2)_{n}$, while the portion below is $1/(zq;q^2)_{n+1}$. For each diagram, the value of $n$ is unique. So, by summing up over all non-negative integers $n$,
$$\sum_{\lambda\in\mathcal{O}}z^{\#(\lambda)}q^{|\lambda|}=\sum_{n=0}^{\infty}\frac{z^nq^{n(2n+1)}}{(q^2;q^2)_{n}(zq;q^2)_{n+1}}.$$
We finish the proof.
\end{proof}

For most of the identities on Slater's list, such a direct combinatorial proof does not seem attainable. However, finding the partition interpretations for those identities would be a good starting point. The product side is usually ease to be translated into partitions with certain congruence properties. For example, in Section~$3$, we treated Equation~$(4)$ from Slater's list, namely,
\begin{equation}
\sum_{n=0}^{\infty}\frac{(-1)^{n}(-q;q^2)_{n}q^{n^2}}{(q^4;q^4)_{n}}=(q;q^2)_{\infty}(q^2;q^4)_{\infty}. 
\end{equation}
Let $\mathcal{S}_4$ be the set of strict partitions without multiples of $4$, then
$$\sum_{\lambda\in\mathcal{S}_4}z^{\#(\lambda)}q^{|\lambda|}=(-zq;q^2)_{\infty}(-zq^2;q^4)_{\infty}.$$
The product side of Equation~$(4)$ is just the special case of $z\to-1$. By Theorem~\ref{thm:P4Gen}, Equation~$(4)$ is equivalent to
$$\sum_{\lambda\in\mathcal{P}_4}(-1)^{\#(\lambda)}q^{|\lambda|}=\sum_{\lambda\in\mathcal{S}_4}(-1)^{\#(\lambda)}q^{|\lambda|}.$$
This can be rephrased as the following partition identity.
\begin{theorem}
Let $\mathcal{P}_{4,e}(n)$ and $\mathcal{P}_{4,o}(n)$ be the set of partitions of $n$ in $\mathcal{P}_4$ with even length and odd length, respectively. And let $\mathcal{S}_{4,e}(n)$ and $\mathcal{S}_{4,o}(n)$ be the set of partitions of $n$ in $\mathcal{S}_4$ with even length and odd length, respectively. For any $n\geq0$,
$$\mathcal{P}_{4,e}(n)-\mathcal{P}_{4,o}(n)=\mathcal{S}_{4,e}(n)-\mathcal{S}_{4,o}(n).$$
\end{theorem}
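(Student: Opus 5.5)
The plan is to prove the equivalent generating-function identity
$$\sum_{\lambda\in\mathcal{P}_4}(-1)^{\#(\lambda)}q^{|\lambda|}=\sum_{\lambda\in\mathcal{S}_4}(-1)^{\#(\lambda)}q^{|\lambda|}.$$
Comparing coefficients of $q^n$ then gives the stated relation. The left side is $\sum_n\bigl(\mathcal{P}_{4,e}(n)-\mathcal{P}_{4,o}(n)\bigr)q^n$, since $(-1)^{\#(\lambda)}=1$ exactly when the length is even. The same reasoning applies to the right side with $\mathcal{S}_{4,e}(n)-\mathcal{S}_{4,o}(n)$.

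For the left side, specialize Theorem~\ref{thm:P4Gen}, equation \eqref{eq:P4gen}, at $x=y=-1$. Because $\#(\lambda)=o(\lambda)+e(\lambda)$, the weight $x^{o(\lambda)}y^{e(\lambda)}$ becomes $(-1)^{\#(\lambda)}$. On the series side, $-yq/x=-q$, so we get
$$\sum_{n\ge0}\frac{(-1)^n(-q;q^2)_nq^{n^2}}{(q^4;q^4)_n}.$$
This is exactly the sum side of \eqref{eq:SlaterList4}. Equivalently, one can read it off Theorem~\ref{thm36} with $x=-1$, which gives the product $(-xq;q^2)_\infty/(-q^2;q^2)_\infty=(q;q^2)_\infty/(-q^2;q^2)_\infty$.

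For the right side, a strict partition with no multiple of $4$ uses each odd number at most once and each number $\equiv 2\pmod 4$ at most once. Its generating function is therefore $(-zq;q^2)_\infty(-zq^2;q^4)_\infty$, and at $z=-1$ this becomes $(q;q^2)_\infty(q^2;q^4)_\infty$.

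It remains to check that the two products agree. We need $1/(-q^2;q^2)_\infty=(q^2;q^4)_\infty$. This is Euler's identity $(-q;q)_\infty=1/(q;q^2)_\infty$ with $q\to q^2$. So both sides equal $(q;q^2)_\infty(q^2;q^4)_\infty$, which is \eqref{eq:SlaterList4}.

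No step here is a real obstacle, since the heavy lifting is done by Theorem~\ref{thm:P4Gen} and Theorem~\ref{thm36}, the latter coming from Watson's transformation. The only care needed is the bookkeeping: the sign $x=y=-1$ must track the length and not only the odd parts, and $\mathcal{S}_4$ must be correctly identified as allowing parts $\equiv 1,2,3\pmod 4$, each at most once. A bijective (sign-reversing involution) proof would be more satisfying, but it is not required for the statement.
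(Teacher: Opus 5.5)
Your proposal is correct and is essentially the paper's argument. The paper likewise specializes Theorem~\ref{thm:P4Gen} at $x=y=-1$ to get the sum side of Slater's Equation~$(4)$, identifies $(q;q^2)_\infty(q^2;q^4)_\infty$ as $\sum_{\lambda\in\mathcal{S}_4}(-1)^{\#(\lambda)}q^{|\lambda|}$, and appeals to Equation~$(4)$, which it obtains from Theorem~\ref{thm36} with $x\to-1$; your use of Euler's identity to match $(q;q^2)_\infty/(-q^2;q^2)_\infty$ with $(q;q^2)_\infty(q^2;q^4)_\infty$ simply spells out the step the paper calls immediate.
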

One can construct involutions on $\mathcal{P}_4$ and $\mathcal{S}_4$ to characterize the difference on both sides, and then try to find a bijection between them. This would provide a combinatorial proof for Equation~$(4)$. The same process now can be applied to all the identities we treated in this project. Using the partition sets we constructed through SIP classes, an equivalent partition identity can be presented for each of the identities. Thus, this project could lead to a further study of Slater's list from a combinatorial point of view.\\

\section*{Acknowledgments}
Aritram Dhar thanks George E. Andrews for hosting him as a visiting research scholar at the Pennsylvania State University from May 7--June 25, 2025, during which this project began following discussions with Runqiao Li. Ankush Goswami also thanks George E. Andrews for the warm hospitality during his one-month visit to Penn State (July--August 2023), where he was introduced to a related project on SIP.

\end{document}